\newtheorem{theorem}{Theorem}
\newtheorem{corollary}[theorem]{Corollary}
\newtheorem{lemma}[theorem]{Lemma}
\newtheorem{proposition}[theorem]{Proposition}
\newtheorem{definition}[theorem]{Definition}
\newtheorem{remark}[theorem]{Remark}
\newcommand{\be}[1]{\begin{equation}\label{#1}}
\newcommand{\ee}{\end{equation}}
\numberwithin{equation}{section}
\newcommand{\ba}[1]{\begin{align}\label{#1}}
\newcommand{\ea}{\end{align}}
\numberwithin{equation}{section}
\newcommand{\ben}{\begin{equation*}}
\newcommand{\een}{\end{equation*}}
\numberwithin{equation}{section}
\newenvironment{proof}[1][\relax]%s
  {\paragraph{Proof\ifx#1\relax\else~of #1\fi}}%
  {~\hfill$\square$\par\bigskip}
\newcommand{\calE}{\mathcal{E}}
\newcommand{\calF}{\mathcal{F}}
\newcommand{\calG}{\mathcal{G}}
\newcommand{\calJ}{\mathcal{J}}
\newcommand{\calS}{\mathcal{S}}
\newcommand{\calT}{\mathcal{T}}
\newcommand{\calX}{\mathcal{X}}
\newcommand{\bbE}{\mathbb{E}}
\newcommand{\bbP}{\mathbb{P}}
\newcommand{\bbR}{\mathbb{R}}
\newcommand{\bbZ}{\mathbb{Z}}
\newcommand{\Pim}{\vec\Pi_{\rm max}^{(x)}}
\newcommand{\rk}[1]{\bgroup\color{red}%
  \par\medskip\hrule\smallskip%
  \noindent\textbf{#1}%
  \par\smallskip\hrule\medskip\egroup}
\title{On the number of maximal paths in directed last-passage percolation}
\author{Hugo Duminil-Copin, Harry Kesten, Fedor Nazarov, \\
Yuval Peres and Vladas Sidoravicius}
\date{\today}
\begin{document}

\maketitle

\begin{abstract}
We show that the number of maximal paths in directed last-passage percolation on the hypercubic lattice ${\mathbb Z}^d$ $(d\geq2)$ in which weights take finitely many values is typically exponentially large.
\end{abstract}
\section{Introduction}
Consider a family of random variables, called weights, indexed by sites of $\mathbb Z^d$ with $d\ge2$. Associate with each path a weight equal to the sum of the weights of its sites. 
Directed last-passage percolation (DLPP) is a variant of first passage percolation studying directed paths of maximal weight. We refer to \cite{Mar06} for a survey on this model.

Formally, let $\mathbb Z^d$ be the hypercubic lattice of points with integer coordinates. For $1\le i\le d$, let $e_i$ be the site of $\bbZ^d$ with $i$-th coordinate equal to $1$, and other coordinates equal to 0. 

A {\em directed path} $\pi$ connecting $x$ to $y$ is a sequence of sites $\pi=\{x=\pi_1,\dots,\pi_k=y\}$ such that $\pi_{i+1}-\pi_i\in\{e_1,\dots,e_d\}$ for each $1\le i<k$. The integer $k$ is called the {\em length} of $\pi$. 
Let $(\omega_x:x\in\mathbb Z^d)$ be a collection of random variables, called {\em weights}, indexed by sites of $\bbZ^d$.
%Let $\vec\Pi^n(\omega)$ be the set of directed paths of length $n$ starting from $0=(0,\dots,0)$.
Given a path $\pi$, the quantity $W_\pi=\sum_{x\in\pi}\omega_x$ is called the {\em weight} of $\pi$. 
For $n\ge1$, a path such that $W_\pi$ is maximized among directed paths of length $n$ starting from 0 is called a {\em maximal path of length $n$}. 
We define $\vec\Pi^{(n)}_{\rm max}$ to be the set of maximal paths of length $n$. % and $W_{\rm max}^{(x)}$ to be the common value of their weight.
The main result of this paper is the following.
\begin{theorem}\label{thm:main2}
If $(\omega_x:x\in\mathbb Z^d)$ are iid random variables taking values in a finite set $\Theta\subset\bbR$, then there exists $\delta>0$ such that for every $n$ large enough,
$$\bbP(\,  |\vec\Pi_{\rm max}^{(n)}|   \le   2^{\delta n})\le \exp(-\delta n).$$
\end{theorem}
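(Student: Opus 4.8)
We may assume $|\Theta|\ge2$: if $|\Theta|=1$, every directed path of length $n$ from $0$ is maximal, so $|\vec\Pi_{\rm max}^{(n)}|=d^{\,n-1}$ and the statement is immediate; after an affine change of variables we take $\Theta\subset\bbZ$ and write $p_{\min}:=\min_{w\in\Theta}\bbP(\omega_0=w)>0$. The engine of the proof is a local \emph{diamond swap}. Let $\pi=\{0=\pi_1,\dots,\pi_n\}$ be a maximal path with steps $s_t:=\pi_{t+1}-\pi_t$. Say that $\pi$ \emph{turns} at position $t$ if $s_t\ne s_{t+1}$, say $s_t=e_i$, $s_{t+1}=e_j$ with $i\ne j$, and let $\pi^{(t)}$ be the path obtained by reversing the order of these two steps, so $\pi^{(t)}$ visits $\pi_t+e_j$ instead of $\pi_{t+1}=\pi_t+e_i$ and agrees with $\pi$ elsewhere. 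Then $\pi^{(t)}$ is a directed path of length $n$ from $0$ with the same endpoint as $\pi$, and $W_{\pi^{(t)}}-W_\pi=\omega_{\pi_t+e_j}-\omega_{\pi_t+e_i}$. Call the turn \emph{balanced} if $\omega_{\pi_t+e_i}=\omega_{\pi_t+e_j}$; then $W_{\pi^{(t)}}=W_\pi=M_n$, so $\pi^{(t)}\in\vec\Pi_{\rm max}^{(n)}$ as well. If $\pi$ has $k$ balanced turns, choose a subset of at least $k/2$ of them that are pairwise at distance $\ge2$; these act on disjoint pairs of steps, each swap preserving the endpoint and the weight $M_n$, so for every subset $S$ of these positions the path $\pi^{(S)}$ obtained by performing exactly the swaps in $S$ is maximal, and two such paths are distinct whenever $S\ne S'$, since they already visit different sites at a position lying in exactly one of $S,S'$. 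Hence $|\vec\Pi_{\rm max}^{(n)}|\ge2^{\lceil k/2\rceil}$, and it suffices to prove that there is $c>0$ such that for all large $n$, with probability at least $1-e^{-cn}$ some maximal path of length $n$ has at least $cn$ balanced turns.

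\emph{Step 1: maximal paths turn linearly often.} I would first produce $c_1>0$ with $\bbP(\text{some maximal path of length }n\text{ has fewer than }c_1 n\text{ turns})\le e^{-c_1 n}$. A path with fewer than $c_1 n$ turns is a concatenation of fewer than $c_1 n$ axis-parallel runs, so there are at most $\binom{n}{c_1n}d^{c_1 n}=e^{\gamma(c_1)n}$ of them with $\gamma(c_1)\to0$ as $c_1\downarrow0$; a fixed path visits $n$ distinct sites, so its weight is a sum of $n$ i.i.d.\ bounded variables and $\bbP(W_\pi\ge n(\bbE\omega_0+\eta))\le e^{-\kappa(\eta)n}$. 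On the other hand $M_n=(\mu+o(1))n$ a.s., where the time constant satisfies $\mu>\bbE\omega_0$ (superadditivity, with strictness coming from $\bbE[\max(\omega_{e_1},\dots,\omega_{e_d})]>\bbE\omega_0$), and $\bbP(M_n\le n(\mu-\eta))\le e^{-\kappa'(\eta)n}$ by the standard concentration estimate for last-passage percolation. Taking $\eta<(\mu-\bbE\omega_0)/2$ and then $c_1$ small enough that $\gamma(c_1)<\kappa(\eta)$, a union bound closes Step 1, since a maximal path has weight $M_n\ge n(\bbE\omega_0+\eta)$ with overwhelming probability.

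\emph{Step 2 (the crux): a positive fraction of those turns is balanced.} It remains to rule out, up to probability $e^{-cn}$, that every maximal path has fewer than $cn$ balanced turns. Fix the lexicographically first maximal path $\pi$; on the event of Step 1 it has $\ge c_1 n$ turns, hence on the bad event $\ge(c_1-c)n$ \emph{strict} turns, positions $t$ where $\omega_{\pi_{t+1}}>\omega_{\pi_t+e_{j_t}}$ for the off-path shoulder direction $e_{j_t}$ (maximality forbids the reverse strict inequality, for then $\pi^{(t)}$ would beat $\pi$). The favourable structural fact is that the shoulder sites $\pi_t+e_{j_t}$ over the turns of $\pi$ lie off $\pi$ and on pairwise distinct level sets, so they are distinct and their weights are i.i.d.\ and independent of the weights along $\pi$; since $\bbP(\omega_0=w)\ge p_{\min}$ for every $w\in\Theta$, a turn is ``a priori'' balanced with probability at least $p_{\min}$. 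The obstruction — and the main difficulty of the whole proof — is that conditioning on $\pi$ being the lexicographically first maximal path pushes these shoulder weights downward, so one cannot merely multiply $(1-p_{\min})$ over the strict turns and union-bound over the $d^{\,n-1}$ possible step sequences; that bound is hopelessly lossy, being essentially equivalent to the false statement $\bbE|\vec\Pi_{\rm max}^{(n)}|\to0$.

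The route I would take for Step 2 is to expose the environment level by level while updating the last-passage function, so that the shoulder of each turn of $\pi$ is examined while its weight is still unrevealed, and to show that, conditionally on the past, it is balanced with probability bounded below by a positive constant at a positive density of turns. The key input is that raising a single off-path shoulder to equal its on-path partner does not change $M_n$ unless that shoulder opens up a strictly better continuation than $\pi$'s; controlling the latter event — it costs a genuine weight increment, which one pays against the now much smaller number of paths consistent with the revealed data — converts ``few balanced turns'' into an exponentially unlikely large-deviation event. I expect the analysis to split according to whether $\mu$ exceeds the oriented site-percolation thresholds of the upper level sets $\{\omega_0\ge w\}$: in the dense regime the maximal path runs through $\Omega(n)$ sites of a common value together with their neighbours, producing balanced turns outright; in the sparse regime the scarcity of high-weight sites is precisely what makes the bad event rare. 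Combining Steps 1 and 2 with the diamond-swap count gives the theorem for any sufficiently small $\delta>0$.
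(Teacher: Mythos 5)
You set up the problem correctly: a maximal path with $\Omega(n)$ well-separated bifurcations (``balanced turns'') yields exponentially many maximal paths via commuting diamond swaps, and showing that every maximal path turns $\Omega(n)$ times is an elementary large-deviations/union-bound argument. Those two pieces match the paper's Step~1 and its concluding combinatorics. But your Step~2 --- which you yourself flag as ``the crux'' and ``the main difficulty of the whole proof'' --- is only a plan, not a proof, and it is precisely the part the paper actually supplies.

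The level-by-level revealment idea is not executed: you would need to control, after exposing the shoulder weight $\omega_{\pi_t+e_{j_t}}$, how the conditional law of the unrevealed environment interacts with the identity of the lexicographically first maximal path, and this is exactly the dependency structure that makes a direct conditioning argument treacherous (as you correctly diagnose, the naive bound is ``hopelessly lossy''). The proposed split by oriented-percolation thresholds of the upper level sets is stated but not justified; in particular, the claim that ``in the sparse regime the scarcity of high-weight sites is precisely what makes the bad event rare'' is an assertion, not an argument. The paper avoids all of this by an entropy-versus-energy (multi-valued map) scheme: on the bad event one picks $\delta n$ well-separated \emph{$R$-good} turns of a maximal path (turns near which the path has not already accumulated close to maximal weight) and surgically rewrites the environment in an $R$-window around each one, creating bifurcations there; the probability cost is $p^{O(R^d)\delta n}$ while the number of available subsets is $\binom{\Omega(n)}{\delta n}$, so the bound closes once one checks the map does not collapse too many preimages onto the same image. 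That injectivity-type check is the ``locality'' condition, and establishing it requires the $R$-goodness of the turns, which in turn needs the intermediate Lemmas~3.1--3.3 (the oriented-percolation case $W_{\rm max}^{(x)}=\|x\|+1$, the near-maximal case, and the slab decomposition). None of these ingredients appears in your proposal, and the slab argument forces the reduction from point-to-anywhere to point-to-point maximal paths (your Theorem~\ref{thm:main2} to Theorem~\ref{thm:main}), which you also skip. So the proposal is a correct framing with the hard step missing; the paper's contribution is exactly the execution of that step by a different mechanism.
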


Note that the constant $\delta>0$ depends on the distribution of $\omega_0$: we do not claim any uniformity with respect to this distribution. 

Counting the number of maximal paths (for a prescribed random environment) naturally arises in the study of directed growth models and directed polymers in
random environment, see e.g.~\cite{FukYos12}. We refer to \cite{Mar06} and references therein for additional details. In many cases, the environment is constituted of independent Bernoulli random variables of parameter $p$, where $p$ exceeds the critical probability $p_c$ of oriented percolation. We refer to \cite{GarGouMar13,Lac12} and references therein for the $p>p_c$ case and to  \cite{ComPopVac08,KesSid10} for results dealing with the general case (see also the recent paper \cite{Shu}).

The previous result extends to point-to-point maximal paths. More precisely, for $x\in\bbZ^d$, a path such that $W_\pi$ is maximized among directed paths from $0$ to $x$ is called a {\em maximal path from 0 to $x$}. 
We define $\vec\Pi^{(x)}_{\rm max}$ to be the set of maximal paths from $0$ to $x$.

Denote the $\ell^1$ norm of $x=(x_1,\dots,x_d)$ by $\|x\|:=|x_1|+\dots+|x_d|$. 
For $\beta>0$, introduce the {\em cone} $C_\beta\subset \bbZ_+^d$ of sites $x=(x_1,\dots,x_d)$ with $0\le x_i\le (1-\beta)\|x\|$ for every $1\le i\le d$. 

\begin{theorem}\label{thm:main}
If $(\omega_x:x\in\mathbb Z^d)$ are iid random variables taking values in a finite set $\Theta\subset\bbR$, then for every $\beta>0$, there exists $\delta=\delta(\beta)>0$ so that for every $x\in C_\beta$ with $\|x\|$ large enough,
$$\bbP(\,  |\vec\Pi_{\rm max}^{(x)}|   \le   2^{\delta \|x\|})\le \exp(-\delta \|x\|).$$
\end{theorem}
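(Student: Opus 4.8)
The plan is to produce, with probability at least $1-e^{-c\|x\|}$, a maximal path from $0$ to $x$ carrying at least $c\|x\|$ pairwise non‑interacting surgeries, each turning it into a \emph{different} path of the \emph{same} weight; applying an arbitrary subset of non‑interacting surgeries then yields $2^{c\|x\|/3}$ distinct maximal paths, which is the assertion (with $\delta$ a small multiple of $c$). The surgery is the \emph{corner flip}: if a directed path takes a step $e_i$ and then a step $e_j$ with $i\neq j$ (a \emph{turn}), through the intermediate site $z$, then replacing these by $e_j$ then $e_i$, through the other intermediate site $z'$, is again a directed path with the same endpoints, whose weight changes by $\omega_{z'}-\omega_z$. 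Maximality forces $\omega_z\ge\omega_{z'}$ at every turn (otherwise the flip strictly raises the weight, which is impossible); call the turn \emph{flippable} when $\omega_z=\omega_{z'}$, in which case the flipped path is again maximal. Two turns at $\ell^1$‑distance $\ge 3$ along the path involve disjoint sites and disjoint levels, so their flips commute and the results are distinct (they differ at the level of the flipped turn); hence a maximal path with $b$ flippable turns yields at least $2^{\lfloor b/3\rfloor}$ maximal paths, and it suffices to show that with probability $\ge 1-e^{-c\|x\|}$ some maximal path from $0$ to $x$ has at least $c\|x\|$ flippable turns.

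The first and easier ingredient is that maximal paths turn linearly often. Put $m=\|x\|$, $\mu=\bbE[\omega_0]$, and $G(x)=\max_\pi W_\pi$ over directed paths from $0$ to $x$. Any fixed directed path of length $m$ has weight distributed as a sum of $m$ i.i.d.\ copies of $\omega_0$, so by Cram\'er's theorem $\bbP(W_\pi\ge(\mu+t)m)\le e^{-mI(t)}$ with $I(t)>0$ for $t>0$, while the number of directed paths from $0$ of length $m$ making at most $\eta m$ turns is at most $e^{\phi(\eta)m}$ with $\phi(\eta)\to 0$ as $\eta\to 0$ (choose the turning slots and the direction of each of the $\le\eta m+1$ runs). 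Fixing $t>0$ and then $\eta$ with $\phi(\eta)<I(t)$, a union bound shows that, off an event of probability $\le e^{-cm}$, no directed path of length $m$ with $\le\eta m$ turns reaches weight $(\mu+t)m$. On the other hand, since $x\in C_\beta$ keeps $x/\|x\|$ bounded away from the coordinate axes, there is $t=t(\beta)>0$ with $\bbP(G(x)\ge(\mu+t)m)\ge 1-e^{-cm}$: the multinomial count of directed paths from $0$ to $x$ is $\ge e^{c(\beta)m}$, so the maximum over them beats the mean $\mu m$ by a linear amount (the shape function in a direction off the axes strictly exceeds $\mu$ for a non‑degenerate weight), and one combines this with the standard speed‑$m$ lower‑tail estimate for last‑passage times; both bounds are uniform over $x\in C_\beta$. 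On the intersection every maximal path has weight $\ge(\mu+t)m$, hence makes more than $\eta m$ turns.

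The second ingredient — that a linear fraction of the turns of a maximal path are flippable, with overwhelming probability — is the heart of the matter. A turn is flippable with probability $\sum_\theta\bbP(\omega_0=\theta)^2>0$ in isolation, its two competing sites carrying i.i.d.\ weights; but the maximal path, hence the locations of its turns and the weights at its rejected sites, is a global functional of the environment, and this dependence is the genuine obstacle. The device I would use is a renormalization: tile $\bbZ^d$ by cubes of a large constant side $L$ and call a cube \emph{good} if its environment is identically $\theta_{\max}:=\max\Theta$; goodness has probability $p_0=p_0(L)>0$ and is independent across cubes. Inside a good cube all directed crossings have the same weight, so a turn of the maximal path lying inside a good cube traversed in non‑aligned fashion is automatically flippable, and inside a good cube one may insert such a turn at no cost; by the first ingredient the maximal path enters $\gtrsim\eta m/(dL)$ cubes with a turn inside, hence traversed non‑aligned. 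One then wants to argue, by exposing the environment cube by cube along the relevant region, that the number of good cubes so traversed dominates a $\mathrm{Binomial}(\lfloor\eta m/(dL)\rfloor,p_0)$, so that a Chernoff bound forces $\ge cm$ flippable turns except with probability $e^{-cm}$. The main difficulty — and where the real work lies — is exactly this: because tracing the maximal path back from $x$ uses the whole environment, one cannot simply reveal cubes in a directed order and treat each cube's interior as ``fresh'' randomness; making a coupling or multi‑scale scheme rigorous, so that ``this cube is good'' survives the conditioning implicit in ``the maximal path enters it with a turn'', is the technical core. (This is also why the surgery chosen, the corner flip, touches only $O(1)$ sites.)

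Granting both ingredients, on the intersection of the two high‑probability events there is a maximal path from $0$ to $x$ with at least $cm$ flippable turns; keeping every third one makes the corresponding flips pairwise non‑interacting, so $|\vec\Pi^{(x)}_{\rm max}|\ge 2^{cm/3}$ off an event of probability $\le e^{-c'm}$. All constants depend only on $\beta$ and the law of $\omega_0$, and the uniformity in $x\in C_\beta$ comes from the uniform path‑count and shape‑function bounds used above. The only uses of the weights being real numbers are Cram\'er's theorem and the equality events $\{\omega_z=\omega_{z'}\}$, so finiteness of $\Theta$ suffices and no rationality hypothesis is needed. Taking $\delta=\delta(\beta)>0$ small enough gives $\bbP\big(|\vec\Pi^{(x)}_{\rm max}|\le 2^{\delta\|x\|}\big)\le e^{-\delta\|x\|}$ for every $x\in C_\beta$ with $\|x\|$ large, as claimed.
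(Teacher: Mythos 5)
Your overall architecture is the same as the paper's: establish (i) that every maximal path has linearly many turns, and (ii) that some maximal path has linearly many \emph{flippable} turns (the paper calls these \emph{bifurcations}), then harvest $2^{\Omega(m)}$ paths by independent corner flips. Ingredient (i) and the harvesting step are fine. But ingredient (ii) — which you yourself flag as ``the heart of the matter'' and ``the technical core'' — is not proved, and this is exactly the novel content of the paper. You propose a renormalization with ``good cubes'' of constant weight and say ``one then wants to argue, by exposing the environment cube by cube\dots that the number of good cubes so traversed dominates a Binomial,'' and then immediately identify why this is not straightforward: the maximal path's route is a global functional of $\omega$, so revealing cubes in any order does not leave the interior of a cube you are about to reveal as fresh randomness conditionally on the path visiting it with a turn. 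You do not close that gap, and it is not a small one: the conditioning is genuinely adversarial (the event ``the maximal path turns in this cube'' is positively correlated with favorable weights there, not with the cube being constant).

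The paper's way around this is not a coupling or a martingale/renormalization scheme but a \emph{multi-valued map principle} (Lemma~\ref{principle}), together with a notion of \emph{local} transformation (Definition~\ref{def:1} and Lemma~\ref{lem:local}). Instead of trying to decouple ``cube good'' from ``path passes here,'' one defines an explicit surgery $T(\omega,\pi,S)$ on configurations that forces the selected turns of $\pi$ to become bifurcations by resetting a bounded window of weights to extreme values, and then bounds $\bbP(\calE)$ by $\bbP(\calE')\cdot\max|\calT(\omega')|/(\varepsilon\min|\calS(\omega)|)$: the key point is that on the event $\{|\Pim|\le 2^{\delta n}\}$ the inverse image $\calT(\omega')$ is provably small (this is what locality buys), so the combinatorics work out without ever conditioning. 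Getting this to work requires the reduction chain of Lemmas~\ref{prop:perco}--\ref{lem:manyturns}: first the pure case $W_{\rm max}^{(x)}=\|x\|+1$, then $W_{\rm max}^{(x)}$ close to $\|x\|+1$, then the production of linearly many \emph{$R$-good} turns (turns around which the path carries low weight on both sides), which is precisely the structural condition ensuring that the final surgery of resetting a box of side $\sim R$ around the turn is local and does not destroy maximality. Your proposal lacks all of this machinery, so as written it has a genuine gap at its central step. (A secondary, much smaller, looseness: your lower bound $\bbP(G(x)\ge(\mu+t)m)\ge 1-e^{-cm}$ leans on a ``standard speed-$m$ lower-tail estimate'' and on the shape function strictly exceeding $\mu$ off the axes; the paper's Proposition~\ref{lem:bounded above} derives this more elementarily by taking maxima over independent corner choices along a single zig-zag path, which is cleaner and self-contained.)
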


 Let us discuss briefly the strategy of the proof. A {\em turn} of  $\pi$ is a site $x=\pi_i$ (with $i$ strictly between 1 and the length of $\pi$) such that $\pi_{i+1}-\pi_i\ne \pi_i-\pi_{i-1}$. A turn $x$ is called a {\em bifurcation} of $\pi$ if $\omega_x=\omega_{x^*}$, where $x^*=x^*(\pi)=\pi_{i-1}+\pi_{i+1}-\pi_i$ (see Fig.~\ref{fig:1}). The proof of Theorem~\ref{thm:main} is based on two successive steps:

\noindent 1.~Show that {\em every} maximal path has a positive density of turns,\\
\noindent 2.~Show that {\em some} maximal path has a positive density of bifurcations.\\
The second step immediately implies the results: one may create an exponentially large number of maximal paths by modifying locally the maximal path provided by Step 2 near the bifurcations (maximal paths may go through $x^*$ instead of $x$ for each bifurcation). 

The first step is very easy to accomplish. The new contribution of this article lies in the proof of the second step.
The proof is based on local modifications of maximal paths near their turns. These local transformations enable us to bound the probability of only having maximal paths with few bifurcations using Lemma~\ref{principle}.

%
%  and, except 
%the case of so-called $\rho-$percolation [Com1], [Kes] only super-critical cases are considered[Mar04] [Gr83], [Darl91], and later studied in [Com1], [Kest], [Yush], [Lac], [Marchand] and [Com2]. We refer
%the reader to [Marchand] and [Com2] for detailed description of the origins of the problem and motivation.
%
%
%In most of cases mentioned in the above paragraph, the environment is constituted by independent Bernoulli random variables, and, except 
%the case of so-called $\rho-$percolation [Com1], [Kes] only super-critical cases are considered, i.e. when $p > p_c$, where $p$ is the probability of the largest
%value of the environment, and $p_c$ is the critical probability for the directed percolation on a given lattice. Corollary of our Theorem in the case of Bernoulli
%random environment extends to all values of $0<p<1$, including the critical point, and sub-critical regime. 
%
%\begin{corollary}
%\label{theo:exp}
%There exist  constants $\alpha >0$ and $c_1, c_2 >0$, independent of  $\; 0\le p \le 1$, such that
%\begin{equation}
%{P}_{p} \{|\vec\Pi_{\rm max}^n|  < e^{\alpha n} \} \le \begin{cases}  c_1 e^{-c_2 \sqrt pn} \quad &{\text {for $d=2$}}, \\  c_1 e^{-c_2n} \quad &{\text {for $d\ge 3.$}}\end{cases}   \label{exp}
%\end{equation}
%\end{corollary}
%

%\noindent Let us mention several possible extensions. First, the probability goes to 1 exponentially fast whenever $\Theta$ is bounded (which is the case in a number of applications). Second, the ingredients of the proof extend to other context. 

We use independence only in the first step. We did not attempt to prove this statement in a more general context, but it seems likely that such an extension exists. %The second step does not really rely on independence. In particular, the transformations performed with the help of Lemma~\ref{principle} require only the finite energy property to hold. 

It also seems very likely that the theorem can be generalized to the non-directed case. In this case, the maximization problem is done on self-avoiding paths of length $n$. The techniques applied here should extend, even though one should be careful to modify them accordingly, which would require a certain amount of work. Similarly, the argument may extend to countable sets $\Theta$, and even to unbounded random variables under some mild moment assumptions. In order to highlight as much as possible the ideas in this paper, we chose not to produce too long a proof, and in particular not to discuss these types of problems here.

\paragraph{Notation} Below, we will be interested only in large values of $n$. For this reason, we will often make the implicit rounding operation consisting in taking the integer part of real numbers. For instance, $\binom{n}{\delta n}$ will mean $\binom{n}{\lfloor \delta n\rfloor}$.

We fix iid weights $(\omega_x:x\in\bbZ^d)$ taking values in a finite set $\Theta$. Without loss of generality, we assume that $\min\Theta=0$ and $\max \Theta=1$ (we will only consider non-degenerate random variables since otherwise the result is trivial). We also set 
$$p:=\min\{\bbP(\omega_0=0),\bbP(\omega_0=1)\}>0.$$  

All the constants mentioned in the proofs depend on the distribution, but we will not refer to it anymore. We will also work with point-to-point maximal paths, and will therefore refer to maximal paths instead of maximal paths from 0 to $x$ when the context is clear.

We will often work with two configurations $\omega$ and $\omega'$. For convenience, we will consistently use $W_\pi$ and $W'_\pi$ for the respective weights of $\pi$ in $\omega$ and $\omega'$. Also, we denote  the weight of a maximal path from $0$ to $x$ in $\omega$ by $W_{\rm max}^{(x)}$ (we will never use the notation for $\omega'$, so that no confusion will be possible).

Let $\mathbf{turn}_\pi\subset\mathbb Z^d$ be the set of turns of $\pi$. % For a turn $x$ of $\pi$ and $x^*$ the associated site (note that $x^*=x^*(\pi)$ depends on $\pi$ and not on $x$ only, but $\pi$ will always be clearly identified from context, so that we avoid the reference in the notation).
For $x\in\mathbf{turn}_\pi$, define
$$\mathbf{shield}_\pi(x):=\big\{y\notin\pi:y-x^*\in\{\pm e_1,\dots,\pm e_d\}\big\},$$
which is the set of neighbors of $x^*$ not in $\pi$, see Fig.~\ref{fig:1}.

Last, for two sites $x$ and $y$ of a directed path $\pi$, let $\pi[x,y]$ be the portion of $\pi$ from $x$ to $y$. Similarly, we define $\pi(x,y)=\pi[x,y]\setminus\{x,y\}$.

\paragraph{Organization of the paper}
The next section contains preliminaries. In particular, it studies the number of turns on a typical maximal path. It also provides an explanation of why Theorem~\ref{thm:main2} follows from Theorem~\ref{thm:main}. Finally, it introduces a multi-valued map principle (Lemma~\ref{principle}) which will be used extensively in the next sections. Section~3 is devoted to the proof of Theorem~\ref{thm:main}. 
\begin{figure}\label{fig:1}
\center{\includegraphics[width=0.50\textwidth]{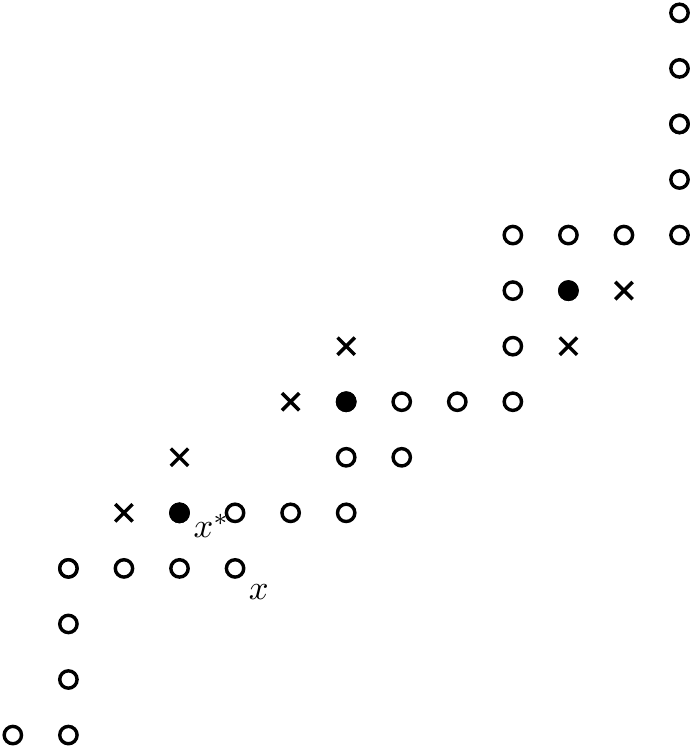}}
\caption{The path $\pi$ is depicted by white bullets. The black sites are sites of the form $x^*$. For three of them, we depicted $\mathbf{shield}_\pi(x)$ with crosses (note that it may be below or above the curve in two dimensions). %Observe that $\mathbf{shield}_\pi(x)$ cannot contain any $y^*$ for $y\in\mathbf{turn}_\pi$. .
}
\end{figure}

\section{Preliminaries}

\subsection{Maximal paths have many turns}

We start by a simple proposition stating that the maximal weight is exceeding the average weight.

\begin{proposition}\label{lem:bounded above}
For every $\beta>0$, there exists $\mu=\mu(\beta)>\bbE[\omega_0]$ and $c=c(\mu,\beta)>0$ such that for every $x\in C_\beta$ with $\|x\|$ large enough,
\begin{equation}\label{eq:limit exponential}\bbP\big(W_{\rm max}^{(x)}< \mu \|x\|\big)\le \exp(-c\|x\|).\end{equation}
\end{proposition}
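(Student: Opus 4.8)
The plan is to produce, with high probability, a single directed path from $0$ to $x$ whose weight already exceeds $\mu\|x\|$ for some $\mu>\bbE[\omega_0]$, using a large-deviation/concentration argument over a suitably chosen family of paths. The rough idea: since weights take at least two distinct values $0$ and $1$ with $\bbP(\omega_0=1)\ge p>0$, a path that is allowed to ``choose'' its steps among a positive fraction of disjoint candidate blocks can pick up a weight strictly larger than the typical density $\bbE[\omega_0]$. The cone condition $x\in C_\beta$ (all coordinates $\le(1-\beta)\|x\|$) guarantees there is genuine freedom in how a directed $0$-to-$x$ path is routed, which is what makes this selection possible.

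First I would set up the greedy/block construction. Fix $n=\|x\|$. Partition the path into roughly $n/L$ consecutive stretches of length $L$ (with $L$ a large constant to be chosen). Because $x\in C_\beta$, within each stretch one may choose, independently of the others, among at least two macroscopically different ways of taking the $L$ steps — e.g.\ within a block one can permute the order in which the $e_i$-steps are taken, visiting $2^{\Omega(L)}$ disjoint (or nearly disjoint) sets of sites, while still ending the block at a prescribed lattice point that keeps us on track toward $x$. On each block, the greedy choice picks the option realizing the largest block-weight. For a single fixed option on a block, the block-weight is a sum of $L$ i.i.d.\ weights, with mean $L\bbE[\omega_0]$; taking the maximum over $2^{\Omega(L)}$ essentially-disjoint options yields, by a union bound combined with Cramér's theorem, a block-weight that is at least $L(\bbE[\omega_0]+\eta(L))$ with probability at least $1-e^{-cL}$, where $\eta(L)>0$ is bounded below uniformly in $L$ for $L$ large (since $0<\bbE[\omega_0]<1$ and the upper tail rate function of $\omega_0$ is finite at values above the mean — here $\Theta$ finite and $\max\Theta=1$ is what we use). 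Set $\mu:=\bbE[\omega_0]+\tfrac12\eta$.

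Next I would chain the blocks. The block choices are made on disjoint sets of sites, hence the successive ``good block'' events are independent; a path achieving the greedy choice on a block $\ge1-2\beta/3$ fraction of the $n/L$ blocks has weight at least $\mu n$ for a slightly smaller $\mu$, and by a standard Chernoff bound the number of ``bad'' blocks exceeds an $\eps$-fraction only with probability $\exp(-c'n)$. On the remaining blocks we simply use the trivial lower bound $0$ on the weight; choosing the good-block fraction close enough to $1$ absorbs this loss. Concatenating the per-block greedy choices gives an admissible directed path $\pi$ from $0$ to $x$ (the block endpoints were prescribed so that the concatenation ends at $x$), and $W_\pi\ge\mu\|x\|$, hence $W_{\rm max}^{(x)}\ge\mu\|x\|$, on an event of probability $\ge 1-\exp(-c\|x\|)$. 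This gives \eqref{eq:limit exponential}.

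The main obstacle, and the part requiring care, is the geometric bookkeeping in the cone: one must verify that for every $x\in C_\beta$ the path can be decomposed into $\Theta(n)$ blocks such that (i) each block individually admits $2^{\Omega(L)}$ routings on essentially disjoint site-sets, (ii) the block endpoints can be chosen consistently so the whole concatenation is a legal directed path terminating exactly at $x$, and (iii) the constants $\eta,c$ depend only on $\beta$ (and the law of $\omega_0$), not on $x$. The cone hypothesis $0\le x_i\le(1-\beta)\|x\|$ is precisely what makes (i)–(ii) feasible — it ensures no coordinate is so dominant as to force an (almost) straight path with no routing freedom — and uniformity in (iii) follows because only the \emph{ratios} $x_i/\|x\|$ enter the construction and these are constrained to a compact region. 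The rest is routine concentration.
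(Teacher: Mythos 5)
Your high-level heuristic --- that the cone condition $x\in C_\beta$ gives routing freedom which can be exploited by local greedy choices --- is exactly the paper's, but the block construction you build on it does not hold up, for two intertwined reasons.

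\smallskip

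\noindent\textbf{The disjointness claim is false.} A directed block of length $L$ in $\bbZ^d$ is contained in a set of $O(L^d)$ sites, so any family of pairwise vertex-disjoint directed paths of length $L$ in it has cardinality $O(L^{d-1})$, not $2^{\Omega(L)}$; permuting the order of the $e_i$-steps produces exponentially many paths, but they overlap heavily. This matters: your tail claim (block max $\ge L(\bbE[\omega_0]+\eta)$ with probability $\ge 1-e^{-cL}$, $\eta$ bounded below in $L$) genuinely requires both independence and exponentially many options. With only polynomially many independent options the gain from the max scales like $\sigma\sqrt{L\log L}=o(L)$, so $\eta(L)\to0$ and no fixed $\mu>\bbE[\omega_0]$ emerges; and with overlapping options you lose the independence that your argument tacitly uses (note also that the tool you name, the union bound, controls $\bbP(\max>t)$ from \emph{above}, which is the wrong direction for producing a high max). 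A secondary issue: your chaining step compensates for bad blocks by bounding their contribution below by $0$ and invoking Chernoff, but that only closes if the per-block failure probability really is $e^{-cL}$ with $\eta$ bounded below --- precisely the pair of conditions that are jointly unachievable.

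\smallskip

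\noindent\textbf{What the paper does instead.} It replaces ``many disjoint paths through a block'' by ``many disjoint \emph{local} choices along one path.'' Pick any directed path $\pi$ from $0$ to $x$ with $k\ge\beta\|x\|$ turns (possible in the cone), and a subset $S$ of $\ge\lfloor k/2\rfloor$ turns pairwise at $\ell^1$-distance $\ge 2$. At each $y\in S$ one may reroute through $y^*$ or not, independently of the other choices, so
$$
W_{\rm max}^{(x)}\ \ge\ \sum_{y\in S}\max\{\omega_y,\omega_{y^*}\}+\sum_{y\in\pi\setminus S}\omega_y .
$$
The right side is a sum of independent bounded variables, and every summand indexed by $S$ has mean $\bbE[\max\{\omega_y,\omega_{y^*}\}]>\bbE[\omega_0]$ by a fixed margin (non-degeneracy). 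Large deviations for independent bounded sums then gives the exponential tail with $\mu$ half-way between the new mean and $\bbE[\omega_0]$. The crucial structural point you should replace your ``$2^{\Omega(L)}$ routings'' claim with is exactly this: a \emph{single} binary local choice already raises the expectation by a constant, and linearly many such choices can be made on disjoint site-sets, so there is no need for exponentially many alternatives nor for a per-block tail that is small in $L$.
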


\begin{proof}
Pick $x\in C_\beta$ with $\|x\|$ large enough. Fix a directed path from $0$ to $x$ having $k\ge \beta \|x\|$ turns. Consider a subset $S$ of ${\bf turn}_\pi$ composed of at least $\lfloor k/2\rfloor$ turns at $\|\cdot\|$-distance at least two of each other. Since for every element $y\in S$, one may choose to go through $y^*$ instead of $y$ (regardless of the choices made before or after, thanks to the fact that turns are at a distance at least two of each other), we find that
$$W^{(x)}_{\rm max}\ge \sum_{y\in S} \max\{\omega_y,\omega_{y^*}\}+\sum_{y\in \pi\setminus S}\omega_y.$$
Note that the variables on the right-hand side are independent. Since there are at least $\lfloor k/2\rfloor$ elements in $S$ associated with random variables satisfying $$\bbE[\max\{\omega_y,\omega_{y^*}\}]>\bbE[\omega_y],$$ the claim follows directly from large deviations theory for independent bounded random variables. \end{proof}
 
 \begin{proposition}\label{prop:many turns}
For every $\mu>\mathbb E[\omega_0]$, there exists $\kappa>0$ such that for every $x\in\bbZ_+^d$ with $\|x\|$ large enough,
 \begin{align}\label{eq:many turns}
 \bbP(W_{\rm max}^{(x)}\ge\mu\|x\|\text{ and }\exists \pi\in \vec\Pi^{(x)}_{\rm max}:|\mathbf{turn}_\pi|< \kappa \|x\|)\le \exp(-\kappa \|x\|).
\end{align}
\end{proposition}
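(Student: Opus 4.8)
The plan is to exploit the fact that a directed path with few turns is almost straight, hence that there are only subexponentially many such paths, and then to kill each one by a large-deviation estimate; independence will be used only through the latter. Write $n=\|x\|$. If some $\pi\in\vec\Pi^{(x)}_{\rm max}$ satisfies $|\mathbf{turn}_\pi|<\kappa n$ while $W^{(x)}_{\rm max}\ge\mu n$, then this $\pi$ is in particular a directed path from $0$ to $x$, of length $n+1$, with at most $\lfloor\kappa n\rfloor$ turns and with $W_\pi=W^{(x)}_{\rm max}\ge\mu n$. Consequently the probability in \eqref{eq:many turns} is bounded, via a union bound over all such paths, by
\begin{equation}\label{eq:union-plan}
N_{\kappa,n}\cdot \bbP\big(W_\pi\ge\mu n\big),
\end{equation}
where $\pi$ denotes any fixed directed path from $0$ to $x$ of length $n+1$ (by the i.i.d.\ assumption $\bbP(W_\pi\ge\mu n)$ does not depend on the choice of $\pi$) and $N_{\kappa,n}$ is the number of directed paths from $0$ to $x$ with at most $\lfloor\kappa n\rfloor$ turns.

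First I would bound $N_{\kappa,n}$. Such a path is encoded by its sequence of $n$ steps in $\{e_1,\dots,e_d\}$, its turns being exactly the slots at which two consecutive steps differ; with at most $k:=\lfloor\kappa n\rfloor$ turns, this step sequence is a concatenation of at most $k+1$ maximal constant runs, hence is determined by the (at most $k$) positions of the direction changes among the $n-1$ slots together with the direction of each run. A crude count gives $N_{\kappa,n}\le \mathrm{poly}(n)\,\binom{n}{k}\,d^{k+1}$, and since $\binom{n}{k}\le e^{nH(k/n)}\le e^{nH(\kappa)}$ for $\kappa<1/2$, where $H(q):=-q\ln q-(1-q)\ln(1-q)$, we obtain $N_{\kappa,n}\le \mathrm{poly}(n)\exp\!\big(n\,\rho(\kappa)\big)$ with $\rho(\kappa):=H(\kappa)+\kappa\ln d$. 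The crucial point is that $\rho(\kappa)\to0$ as $\kappa\to0^+$.

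Next, for a fixed path $\pi$ the weight $W_\pi=\sum_{y\in\pi}\omega_y$ is a sum of $n+1$ i.i.d.\ variables taking values in $[0,1]$ with common mean $\bbE[\omega_0]<\mu$. Setting $\mu':=\tfrac12(\bbE[\omega_0]+\mu)\in(\bbE[\omega_0],\mu)$, for $n$ large we have $\mu n\ge\mu'(n+1)$, so Hoeffding's inequality yields $\bbP(W_\pi\ge\mu n)\le\exp(-c_0 n)$ with $c_0:=2(\mu'-\bbE[\omega_0])^2>0$. Feeding this and the bound on $N_{\kappa,n}$ into \eqref{eq:union-plan} gives, for $n$ large, a bound of the form $\exp\!\big(-n(c_0-\rho(\kappa))+o(n)\big)$. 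It then suffices to pick $\kappa\in(0,1/2)$ small enough that $\rho(\kappa)\le c_0/2$ and $\kappa\le c_0/4$: the probability in \eqref{eq:many turns} is then at most $\exp(-\kappa n)=\exp(-\kappa\|x\|)$ once $n$ is large, as claimed.

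This is really just a first-moment argument, and the only point requiring attention is the order in which the parameters are fixed: $\kappa$ must be chosen after $\mu$ (hence after $c_0$), small enough that the entropy $\rho(\kappa)$ coming from the family of near-straight paths is beaten by the large-deviation cost $c_0$, and small enough to serve simultaneously as the final exponential rate. Beyond this bookkeeping I do not anticipate any genuine obstacle; in particular, in contrast with the harder Step~2 of the paper, independence intervenes here only in the Hoeffding estimate for $W_\pi$.
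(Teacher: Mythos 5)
Your proposal is correct and follows essentially the same route as the paper: a union bound over the subexponentially many directed paths from $0$ to $x$ with fewer than $\kappa\|x\|$ turns, combined with a large-deviation estimate showing that any fixed path has exponentially small probability of weight at least $\mu\|x\|$. The paper compresses the count to $\binom{\|x\|}{\kappa\|x\|}(d-1)^{\kappa\|x\|}$ and invokes generic large deviations for bounded i.i.d.\ variables, whereas you spell out the entropy bound $\rho(\kappa)=H(\kappa)+\kappa\ln d\to 0$ and the Hoeffding constant explicitly, but the argument is the same.
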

%The two previous propositions imply together that
%\begin{equation}
% \lim_{\substack{\|x\|\rightarrow \infty\\ \|x\|_\infty<(1-\eta)\|x\|}}\bbP(\forall\pi\in\vec\Pi^{(x)}_{\rm max}:|\mathbf{turn}_\pi|\le \kappa \|x\|)=0.
% \end{equation}

\begin{proof}%By Proposition~\ref{lem:bounded above}, for $x\in C_\beta$, \begin{equation}\label{eq:LL}\bbP(W_{\rm max}^{(x)}< \mu\|x\|)\le\exp(-c\|x\|).\end{equation}
Large deviation estimates for bounded iid random variables imply that there exists $c'>0$ such that for every fixed path $\pi$,
\begin{equation}\label{eq:cdd}\bbP(W_\pi\ge \mu \|x\|)\le \exp(-c' \|x\|)
.\end{equation}
Since there are less than $\binom {\|x\|}{\kappa \|x\|}(d-1)^{\,\kappa \|x\|}$ paths with less than $\kappa \|x\|$ turns, the union bound implies that the probability that there exists a path with less than $\kappa\|x\|$ turns with weight larger than $\mu\|x\|$ is  exponentially small. \end{proof}

As a consequence of the two previous propositions, we obtain the following corollary.
\begin{corollary}\label{cor:many turns}
For every $\beta>0$, there exists $\kappa>0$ such that 
for every $x\in C_\beta$  with $\|x\|$ large enough,
 \begin{align}%\label{eq:many turns}
 \bbP(\exists \pi\in \vec\Pi^{(x)}_{\rm max}:|\mathbf{turn}_\pi|< \kappa \|x\|)\le \exp(-\kappa \|x\|).
\end{align}

\end{corollary}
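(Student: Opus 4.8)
The plan is to deduce Corollary~\ref{cor:many turns} by combining Proposition~\ref{lem:bounded above} with Proposition~\ref{prop:many turns}, simply chaining the two bad events through the threshold $\mu\|x\|$. Fix $\beta>0$. Proposition~\ref{lem:bounded above} supplies some $\mu=\mu(\beta)>\bbE[\omega_0]$ and $c=c(\mu,\beta)>0$ such that for $x\in C_\beta$ with $\|x\|$ large, $\bbP(W_{\rm max}^{(x)}<\mu\|x\|)\le\exp(-c\|x\|)$. Feed this same $\mu$ into Proposition~\ref{prop:many turns} (whose hypothesis is exactly $\mu>\bbE[\omega_0]$, with no cone restriction needed since $C_\beta\subset\bbZ_+^d$) to get some $\kappa_0>0$ with
\[
\bbP\big(W_{\rm max}^{(x)}\ge\mu\|x\|\text{ and }\exists\,\pi\in\vec\Pi^{(x)}_{\rm max}:|\mathbf{turn}_\pi|<\kappa_0\|x\|\big)\le\exp(-\kappa_0\|x\|)
\]
for $\|x\|$ large enough.

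Now I would perform the union bound. For any $\kappa\le\kappa_0$, the event $\{\exists\,\pi\in\vec\Pi^{(x)}_{\rm max}:|\mathbf{turn}_\pi|<\kappa\|x\|\}$ is contained in $\{\exists\,\pi\in\vec\Pi^{(x)}_{\rm max}:|\mathbf{turn}_\pi|<\kappa_0\|x\|\}$, and the latter splits according to whether $W_{\rm max}^{(x)}<\mu\|x\|$ or $W_{\rm max}^{(x)}\ge\mu\|x\|$. On the first piece we bound by $\exp(-c\|x\|)$ using Proposition~\ref{lem:bounded above}; on the second piece we bound by $\exp(-\kappa_0\|x\|)$ using Proposition~\ref{prop:many turns}. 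Hence the total probability is at most $\exp(-c\|x\|)+\exp(-\kappa_0\|x\|)\le 2\exp(-\min\{c,\kappa_0\}\|x\|)$. Setting $\kappa:=\tfrac12\min\{c,\kappa_0\}$ absorbs the factor $2$ for $\|x\|$ large, and replacing it by a still smaller positive constant if necessary makes the bound $\exp(-\kappa\|x\|)$ hold with the same $\kappa$ in the statement of the event, for all $x\in C_\beta$ with $\|x\|$ large enough.

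There is essentially no obstacle here: the only points to be careful about are bookkeeping ones — making sure the constant $\mu$ chosen in Proposition~\ref{lem:bounded above} is the one fed into Proposition~\ref{prop:many turns}, checking that shrinking $\kappa$ below $\kappa_0$ only weakens the event (so the bound from Proposition~\ref{prop:many turns} still applies), and absorbing the additive factor of $2$ into the exponential rate at the cost of halving the constant and requiring $\|x\|$ large. The corollary is thus an immediate consequence of the two propositions, and I would present it as a two-line argument.
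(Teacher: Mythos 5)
Your proof is correct and matches the paper's (implicit) argument exactly: the paper gives no proof beyond the remark that the corollary ``is a consequence of the two previous propositions,'' and the intended route is precisely the decomposition you carry out — condition on $\{W_{\rm max}^{(x)}<\mu\|x\|\}$ versus $\{W_{\rm max}^{(x)}\ge\mu\|x\|\}$, bound the first piece by Proposition~\ref{lem:bounded above} and the second by Proposition~\ref{prop:many turns} with the same $\mu$, and then shrink $\kappa$ to make a single constant work in both the event and the exponential rate. The bookkeeping you do (observing that shrinking $\kappa$ only weakens the event, and absorbing the factor $2$ by halving the rate for $\|x\|$ large) is exactly what is needed and is handled correctly.
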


\subsection{From Theorem~\ref{thm:main} to Theorem~\ref{thm:main2}}

For every integer $n$ large enough, choose $x_n\in C_{1/3}$ with $\|x_n\|=n-1$. Since paths from $0$ to $x_n$ are of length $n$, we deduce that 
\begin{equation}\bbP(W_{\rm max}^{(n)}<\mu n)\le \bbP(W_{\rm max}^{(x_n)}<\mu n)\le \exp(-cn),\label{eq:ji}\end{equation}
where $\mu>\bbE[\omega_0]$ and $c>0$ are given by Proposition~\ref{lem:bounded above} applied to $\beta=1/3$.
Fix $\kappa=\kappa(\mu)>0$ so small that Proposition~\ref{prop:many turns} implies that for every $x\in\bbZ_+^d$ with $\|x\|$ large enough,
$$ \bbP(W_{\rm max}^{(x)}\ge\mu\|x\|\text{ and }\exists \pi\in \vec\Pi^{(x)}_{\rm max}:|\mathbf{turn}_\pi|< \kappa \|x\|)\le \exp(-\kappa \|x\|).$$
Fix $\beta=\beta(\kappa)>0$ so small that any oriented path from $0$ to $x\notin C_{\beta}$ contains fewer than $\kappa\|x\|$ turns. We deduce from the previous inequality that for every $x\notin C_{\beta}$,
\begin{equation} \bbP(W_{\rm max}^{(x)}\ge\mu\|x\|)\le \exp(-\kappa \|x\|).\label{eq:jji}\end{equation}
Now, let $\delta=\delta(\beta)>0$ such that Theorem~\ref{thm:main} holds true.
We find that
\begin{align*}\bbP(|\Pi_{\rm max}^{(n)}|\le 2^{\delta n})&\le \bbP(W_{\rm max}^{(n)}<\mu n)+\sum_{\substack{x\notin C_{\beta}\\ \|x\|=n-1}}\bbP(W_{\rm max}^{(x)}\ge \mu n)+\sum_{\substack{x\in C_{\beta}\\ \|x\|=n-1}}\bbP(|\Pi_{\rm max}^{(x)}|\le 2^{\delta n})\\
&\le \exp(-cn)+O(n^{d-1})\exp(-\kappa n)+O(n^{d-1})\exp(-\delta n).\end{align*}
The claim follows readily.
%Theorem~\ref{thm:main} implies that the second term on the right-hand side is bounded by $O(n^{d-1})\exp(-\delta n)$, so that we may focus on the first term on the right-hand side.
%
%Fix $x\in\mathbb Z^d$ with $\|x\|=n$. 
%\begin{lemma}\label{thm}
%There exists $\beta'>0$ such that
%$$ \lim_{n\rightarrow\infty}\bbP(\exists x\notin C_{\beta'}:\|x\|=n\text{ and }W_{\rm max}^{(x)}=W_{\rm max}^{(n)})=0.$$\end{lemma}
%
%\begin{proof}
%Fix $\beta>0$ and consider a sequence $(x_n)\in C_\beta^\bbN$ such that for all $n$, $\|x_n\|=n$. Since $W_{\rm max}^{(n)}\ge W_{\rm max}^{(x_n)}$, Proposition~\ref{lem:bounded above} implies that  
%$\lim_{n\rightarrow\infty}\bbP(W_{\rm max}^{(n)}\ge \mu n)=1$ for some $\mu=\mu(\beta)>\bbE[\omega_0]$.
%\end{proof}
%
%

\subsection{A multivalued map principle}

We will bound the probability of events using the following multi-valued map principle. For a set $\calF$, let $\mathfrak P(\calF)$ be the power set of $\calF$. 
\begin{lemma}[multi-valued map principle]\label{principle}
For every $\varepsilon>0$ and every two events $\calE$ and $\calE'$, assume that there exist a set $\calF$ and two maps 
\begin{align*}\calS&:\calE\longrightarrow \mathfrak P(\calF)\\
T&:\{(\omega,F)\in\calE\times\calF\text{ such that }F\in \calS(\omega)\}\longrightarrow \calE'\end{align*}
satisfying that for every $\omega'\in\calE'$ and $F\in\calF$,
\begin{align}\label{eq:condition}\bbP(\omega')\ge \varepsilon\, \bbP(\omega\in\calE\text{ such that }F\in\calS(\omega)\text{ and }\omega'=T(\omega,F)).\end{align}
Then, 
\begin{equation}\label{eq:principle}\bbP(\calE)~\le~ \frac{\displaystyle\max_{\omega'\in\calE'}|\calT(\omega')|}{\varepsilon\cdot\displaystyle\min_{\omega\in \calE}|\calS(\omega)|}~\bbP(\calE'),\end{equation}
where $$\calT(\omega'):=\{F\in\calF:\exists \omega\in\calE\text{ such that }F\in\calS(\omega)\text{ and }\omega'=T(\omega,F)\}.$$
\end{lemma}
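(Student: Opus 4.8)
The plan is to prove the inequality \eqref{eq:principle} by a double-counting (weighted incidence) argument over the bipartite relation between $\calE$ and $\calF$ induced by the map $\calS$. First I would introduce, for each pair $(\omega, F)$ with $\omega \in \calE$ and $F \in \calS(\omega)$, the weight $\bbP(\omega)$, and sum it over all such admissible pairs in two different orders. Summing first over $F$ and then over $\omega$ gives
$$\sum_{\omega \in \calE} \sum_{F \in \calS(\omega)} \bbP(\omega) = \sum_{\omega \in \calE} |\calS(\omega)|\, \bbP(\omega) \ge \Big(\min_{\omega \in \calE}|\calS(\omega)|\Big)\, \bbP(\calE).$$
Summing in the other order, I would group the pairs according to their image $\omega' = T(\omega, F) \in \calE'$: for fixed $F \in \calF$ and fixed $\omega' \in \calE'$, hypothesis \eqref{eq:condition} bounds the total weight of the $\omega \in \calE$ with $F \in \calS(\omega)$ and $T(\omega,F) = \omega'$ by $\varepsilon^{-1}\bbP(\omega')$. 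Summing this over the pairs $(F,\omega')$ that actually occur — i.e.\ over $\omega' \in \calE'$ and $F \in \calT(\omega')$ — yields
$$\sum_{\omega \in \calE}\sum_{F \in \calS(\omega)} \bbP(\omega) \le \sum_{\omega' \in \calE'} \sum_{F \in \calT(\omega')} \frac{\bbP(\omega')}{\varepsilon} \le \frac{1}{\varepsilon}\Big(\max_{\omega' \in \calE'}|\calT(\omega')|\Big)\,\bbP(\calE').$$
Chaining the two displays and dividing by $\min_{\omega \in \calE}|\calS(\omega)|$ gives \eqref{eq:principle}.

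The only subtlety, and the step I would treat most carefully, is bookkeeping of which index pairs are being summed and making sure no admissible pair $(\omega, F)$ is dropped when passing to the second grouping. Concretely, every admissible pair $(\omega, F)$ has a well-defined image $\omega' := T(\omega, F) \in \calE'$ by definition of $T$, and for that image we automatically have $F \in \calS(\omega)$ and $\omega' = T(\omega,F)$, hence $F \in \calT(\omega')$ by the very definition of $\calT(\omega')$. So the map $(\omega, F) \mapsto (\omega', F)$ sends the set of admissible pairs into $\{(\omega', F) : \omega' \in \calE', F \in \calT(\omega')\}$, and every admissible pair contributing to the left sum is accounted for on the right. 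One should also note that the statement is only meaningful when $\min_{\omega \in \calE}|\calS(\omega)| > 0$, i.e.\ $\calS(\omega) \ne \emptyset$ for all $\omega \in \calE$ (otherwise the right-hand side is infinite and the bound is vacuous); this is harmless and I would simply remark it, or assume it implicitly as the application will always supply nonempty images.

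I do not anticipate a genuine obstacle here: the lemma is a clean abstract counting principle and the proof is essentially the above two-way sum. If pressed, the part requiring the most care is purely notational — writing the summation indices so that the inequality \eqref{eq:condition}, which is phrased pointwise in $(\omega', F)$, is applied to exactly the right fibers of the map $(\omega, F) \mapsto T(\omega, F)$ — but there is no analytic or combinatorial difficulty beyond that.
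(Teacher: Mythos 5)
Your proposal is correct and is essentially the same argument as the paper's: both form the double sum $\sum_{\omega\in\calE}\sum_{F\in\calS(\omega)}\bbP(\omega)$, bound it below by $\min_{\omega\in\calE}|\calS(\omega)|\cdot\bbP(\calE)$, regroup the pairs by their image $\omega'=T(\omega,F)$ to rewrite the sum over $\omega'\in\calE'$ and $F\in\calT(\omega')$, and then apply hypothesis~\eqref{eq:condition} fiberwise followed by the trivial bound $|\calT(\omega')|\le\max_{\omega'}|\calT(\omega')|$. The paper phrases the first step by dividing $\bbP(\omega)$ by $|\calS(\omega)|$ inside the sum rather than multiplying out, but this is a cosmetic difference; your bookkeeping of the regrouping (that every admissible pair lands in $\{(\omega',F):F\in\calT(\omega')\}$) is exactly the point the paper's middle equality encodes.
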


\begin{proof}Simply write
\begin{align*}\bbP(\calE)=\sum_{\omega\in\calE}\bbP(\omega)&= \sum_{\omega\in\calE}\sum_{F\in \calS(\omega)}\frac{\bbP(\omega)}{|\calS(\omega)|}\\
&\le \frac1{\displaystyle\min_{\omega\in \calE}|\calS(\omega)|}\sum_{\omega\in\calE}\sum_{F\in \calS(\omega)}\bbP(\omega)\\
&= \frac1{\displaystyle\min_{\omega\in \calE}|\calS(\omega)|}\sum_{\omega'\in\calE'}\sum_{F\in \calT(\omega')} \bbP(\omega\in\calE:F\in\calS(\omega)\text{ and }\omega'=T(\omega,F))\\
&\le \frac1{\displaystyle\varepsilon\cdot\min_{\omega\in \calE}|\calS(\omega)|}\sum_{\omega'\in\calE'}\sum_{F\in \calT(\omega')} \bbP(\omega')\\
&\le \frac{\displaystyle\max_{\omega'\in\calE'}|\calT(\omega')|}{\displaystyle\varepsilon\cdot\min_{\omega\in \calE}|\calS(\omega)|} \ \bbP(\calE')
\end{align*}(the second inequality is due to \eqref{eq:condition}), which is the claim.
\end{proof}

For $x\in\bbZ^d_+$, we always consider $\calS(\omega)$ to be a collection of pairs $(\pi,S)$ with $\pi$ a maximal path from 0 to $x$ of $\omega$ and $S\subset \pi$. Except for Lemma~\ref{prop:intermediate}, we will have that $S\subset{\bf turn}_\pi$. From now on, we restrict our attention to such maps and write $T(\omega,\pi,S)$ instead of $T(\omega,(\pi,S))$. In what follows, the set $S$ should be understood as places near which the configuration is modified in such a way that sites of $S$ become bifurcations of $\pi$ in $T(\omega,\pi,S)$. 

The following notion of local transformation will be crucial in the future. Roughly speaking, locality means that maximal paths in $\omega'$ are easy to identify: they are maximal paths in $\omega$, except that they may go through $y^*$ instead of $y$ for every new bifurcation $y$ of the path $\pi$, and that these new bifurcations are themselves only localized at elements of $S$. 

\begin{definition}\label{def:1}Call $T$ {\em local}  if for every $\omega'=T(\omega,\pi,S)$,
\begin{enumerate}[noitemsep,nolistsep]
\item $\pi$ is a maximal path of $\omega'$ and the set of bifurcations of $\pi$ in $\omega'$ is the union of $S$ and a subset of the set of bifurcations of $\pi$ in $\omega$.  
\item For any maximal path $\pi'$ of $\omega'$ satisfying that for every $z_0,z_1\in\pi$ such that $\pi'[z_0,z_1]\cap\pi=\{z_0,z_1\}$, one of the following two conditions holds:

- the sums of weights in $\omega$ of $\pi'(z_0,z_1)$ and $\pi(z_0,z_1)$ are equal, 

- there exists $y\in S$ such that $\pi'(z_0,z_1)=\{y^*\}$.
\end{enumerate}
\end{definition}

The part $\pi'(z_0,z_1)$ of the path $\pi'$ should be understood as an excursion away from $\pi$ (see Fig.~\ref{fig:5}). The second condition yields that every such excursion is either reduced to a single site at a bifurcation, or is already part of a maximal path in $\omega$.

\begin{figure}
\center{\includegraphics[width=0.50\textwidth]{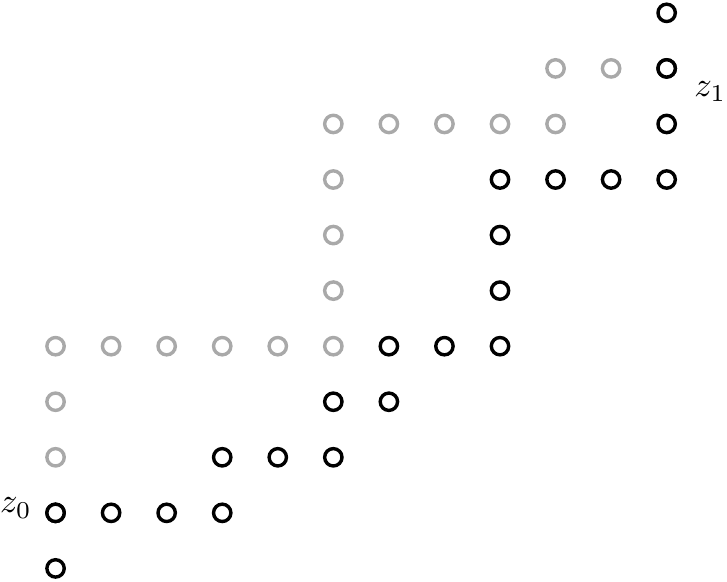}}
\caption{\label{fig:5}\label{fig:8}We depicted $\pi'(z_0,z_1)$ in gray.}
\end{figure}

%\begin{remark}\label{rmk:equiv}Note that the second condition is equivalent to the fact that for every maximal path $\pi'$ in $\omega'$, there exists a maximal path $\tilde\pi$ in $\omega$ coinciding with it except at some sites $y$ of $S$, in which $\pi'$ goes through $y^*$ instead of $y$ (simply construct $\tilde\pi$ by replacing each site of $\pi'$ of the form $y^*$ for some $y\in S$ by $y$). While this formulation of the second condition may seem more natural, the condition stated in the definition will be simpler to check in the proofs later on.
%\end{remark} 
 The notion of locality will be important when bounding the cardinality of $\calT(\omega')$ thanks to the following lemma.
\begin{lemma}\label{lem:local}
Let $\delta>0$ and $x\in\bbZ_+^d$. Assume that for every $\omega\in \calE$, $|\Pim(\omega)|\le 2^{\delta \|x\|}$ and $|S|\le \delta \|x\|$ for every $(\pi,S)\in\calS(\omega)$. 
If $T$ is local, then for every $\omega'\in\calE'$,
$$|\calT(\omega')|\le 2^{5\delta \|x\|}.$$ 
\end{lemma}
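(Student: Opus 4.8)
The plan is to bound $|\calT(\omega')|$ by reconstructing, from $\omega'$ alone, all the pairs $(\pi,S)$ that could have produced it. Fix $\omega'\in\calE'$ and suppose $F=(\pi,S)\in\calT(\omega')$, so there exists $\omega\in\calE$ with $F\in\calS(\omega)$ and $\omega'=T(\omega,\pi,S)$. The idea is to count the possible values of $\pi$ first, then the possible values of $S$, and multiply. Since $T$ is local, item (1) of Definition~\ref{def:1} tells us that $\pi$ is itself a maximal path of $\omega'$, so $\pi\in\Pim(\omega')$. But we are not given a bound on $|\Pim(\omega')|$; the hypothesis only controls $|\Pim(\omega)|$ for $\omega\in\calE$. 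So the first real step is to relate maximal paths of $\omega'$ to maximal paths of $\omega$.

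This is where item (2) of locality enters. Any maximal path $\pi'$ of $\omega'$ decomposes along its intersection with $\pi$ into the common vertices together with excursions $\pi'(z_0,z_1)$, and each excursion is either weight-equivalent (in $\omega$) to the corresponding piece of $\pi$, or is a single site $y^*$ with $y\in S$. Replacing each excursion of the first type by $\pi(z_0,z_1)$ and each excursion of the second type by the vertex $y$ produces a path that agrees with $\pi$ and has the same $\omega$-weight as $\pi'$ has $\omega'$-weight up to the contribution of the modified sites; one checks this recovers a maximal path of $\omega$, so that the number of maximal paths of $\omega'$ is at most $|\Pim(\omega)|$ times the number of ways of choosing, for the at most $|S|\le\delta\|x\|$ bifurcations in $S$, whether the excursion passes through $y$ or $y^*$ — i.e.\ a factor at most $2^{\delta\|x\|}$. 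More carefully: every maximal path $\pi'$ of $\omega'$ is obtained from some maximal path of $\omega'$ that equals $\pi$ on its trace with $\pi$... the clean statement is that $\pi$, being a maximal path of $\omega'$, lies among a set of size at most $|\Pim(\omega)|\cdot 2^{|S|}\le 2^{\delta\|x\|}\cdot 2^{\delta\|x\|}=2^{2\delta\|x\|}$, because the excursion-surgery above is an injection from $\Pim(\omega')$ into (maximal paths of $\omega$) $\times\,\{$subsets of $S\}$, hence $|\Pim(\omega')|\le 2^{\delta\|x\|}2^{\delta\|x\|}$ whenever $\calE'$ was reached from some $\omega\in\calE$ — and this holds uniformly since any such $\omega$ satisfies $|\Pim(\omega)|\le2^{\delta\|x\|}$.

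So $\pi$ ranges over a set of at most $2^{2\delta\|x\|}$ paths. Next, given $\pi$, the set $S$ is a subset of ${\bf turn}_\pi$ of size at most $\delta\|x\|$. There are at most $\|x\|$ turns on a path of length $\|x\|+1$, so the number of such subsets is at most $\sum_{j\le\delta\|x\|}\binom{\|x\|}{j}\le 2^{H(\delta)\|x\|}$ where $H$ is the binary entropy; choosing $\delta$ small enough this is at most $2^{2\delta\|x\|}$, say, but more simply one can bound $\binom{\|x\|}{\delta\|x\|}\le 2^{\|x\|}$ crudely — that is too lossy. The right bound is $\binom{\|x\|}{\le\delta\|x\|}\le 2^{C\delta\log(1/\delta)\|x\|}$, which for $\delta$ small is $\le 2^{3\delta\|x\|}$. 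Then $|\calT(\omega')|\le 2^{2\delta\|x\|}\cdot 2^{3\delta\|x\|}=2^{5\delta\|x\|}$, as claimed. I should double-check the exact allocation of exponents: $2\delta$ from $\Pim(\omega')\le|\Pim(\omega)|\cdot2^{|S|}$ and $3\delta$ from the count of subsets $S$; the lemma's $5\delta$ suggests exactly this split, possibly with an implicit assumption that $\delta$ is small enough for the entropy bound.

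The main obstacle is the first step — showing $|\Pim(\omega')|\le 2^{2\delta\|x\|}$ — because it requires carefully exploiting \emph{both} parts of the locality definition: part (1) to know $\pi$ itself is maximal in $\omega'$ and to identify its bifurcation set, and part (2) to perform the excursion surgery that converts an arbitrary maximal path of $\omega'$ into a maximal path of $\omega$ plus a bounded amount of extra data. One must verify that this surgery genuinely lands in $\Pim(\omega)$ (that the resulting path has maximal $\omega$-weight, which follows because $W'_{\pi'}=W'_{\rm max}$ and the weight bookkeeping across the modified sites is exact) and that it is injective given the recorded subset of $S$. Everything else is a counting estimate that goes through for $\delta$ sufficiently small.
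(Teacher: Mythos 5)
Your Step 1 — bounding $|\Pim(\omega')|$ by $2^{2\delta\|x\|}$ via the excursion surgery provided by item (2) of locality, using item (1) to see that $\pi$ itself is maximal in $\omega'$ — is essentially the paper's argument and is fine. The genuine gap is in Step 2, where you count the possible sets $S$ once $\pi$ is fixed. You take $S$ to range over all subsets of $\mathbf{turn}_\pi$ of size at most $\delta\|x\|$, and try to bound $\binom{\|x\|}{\le\delta\|x\|}$ by $2^{3\delta\|x\|}$ via an entropy estimate. But this inequality is false precisely in the regime of interest: $\binom{n}{\delta n}\approx 2^{nH(\delta)}$ with $H(\delta)/\delta\to\infty$ as $\delta\to0$, so for all small $\delta$ one has $\binom{n}{\delta n}\gg 2^{5\delta n}$, let alone $2^{3\delta n}$. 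Worse, the lemma is stated without any smallness assumption on $\delta$, so no tuning of $\delta$ can rescue this count.

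The paper does something sharper here that you are missing. Having fixed a candidate path $\pi^1$, one does not count arbitrary small subsets of $\mathbf{turn}_{\pi^1}$: item (1) of locality forces any admissible $S$ to be contained in the set of bifurcations of $\pi^1$ in $\omega'$. One then bounds the size of that bifurcation set directly. Picking one $\omega^1\in\calE$ and $S^1$ with $\omega'=T(\omega^1,\pi^1,S^1)$, the bifurcations of $\pi^1$ in $\omega^1$ number at most $2\delta\|x\|$ (else $|\Pim(\omega^1)|>2^{\delta\|x\|}$, contradicting $\omega^1\in\calE$), and any bifurcation of $\pi^1$ in $\omega'$ that is not one in $\omega^1$ lies in $S^1$, contributing at most $\delta\|x\|$ more. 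So $\pi^1$ has at most $3\delta\|x\|$ bifurcations in $\omega'$, and $S$, being a subset of these, ranges over at most $2^{3\delta\|x\|}$ sets. This yields $2^{2\delta\|x\|}\cdot 2^{3\delta\|x\|}=2^{5\delta\|x\|}$ with no entropy estimate and no constraint on $\delta$. You need this structural use of item (1) for Step 2; the raw binomial count simply does not close.
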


\begin{proof}
Set $n:=\|x\|$. Fix $\omega'\in\calE'$. 
%and $(\pi,S)$ such that $\omega'=T(\omega,\pi,S)$. 
First, let us bound the number of paths $\pi$ such that there exist $\omega\in\calE$ and $S$ with $|S|\le \delta n$ satisfying that $(\pi,S)\in\calS(\omega)$ and $\omega'=T(\omega,\pi,S)$. By Item 1 of locality, we simply need to bound the number of maximal paths in $\omega'$. In order to bound this number, we fix $\omega^0\in\calE$, $|S^0|\le \delta n$ and $\pi^0$ such that $\omega'=T(\omega^0,\pi^0,S^0)$.
By locality again, for every maximal path $\pi'$ in $\omega'$, there exists a maximal path in $\omega^0$ coinciding with it, except at some sites $y$ of $S^0$, where $\pi'$ goes through $y^*$ instead of $y$. Since $|S^0|\le \delta n$ and since $|\Pim(\omega^0)|\le 2^{\delta n}$ (since $\omega^0\in\calE$), we deduce that
\begin{equation}\label{eq:aa}|\Pim(\omega')|\le 2^{\delta n}|\Pim(\omega^0)|\le 4^{\delta n}.\end{equation}

Second, fix $\pi^1$ such that there exist $\omega\in\calE$ and $S$ with $|S|\le \delta n$ satisfying that $(\pi^1,S)\in \calS(\omega)$ and $\omega'=T(\omega,\pi^1,S)$. We wish to bound by $2^{3\delta n}$ the number of possible choices for such $S$. Item 1 of locality implies that the (no more than $\delta n$) elements of any possible choice  of $S$ are bifurcations of $\pi^1$ in $\omega'$. It is therefore sufficient to show that  there are at most $3\delta n$ bifurcations of $\pi^1$ in $\omega'$. 
In order to prove this statement, pick $\omega^1\in\calE$ and $S^1$ such that $\omega'=T(\omega^1,\pi^1,S^1)$. There are no more than $2\delta n$ bifurcations of $\pi^1$ in $\omega^1$ since otherwise $|\Pim(\omega^1)|\ge 2^{\delta n}$. Furthermore, there are at most $\delta n$ bifurcations of $\pi^1$ in $\omega'$ which are not bifurcations in $\omega^1$ since they are all included in $S^1$ (by Item 1 of locality).

Overall, the first paragraph implies that there are at most $4^{\delta n}$ choices for $\pi$. The second paragraph implies that there are at most $2^{3\delta n}$ choices for $S$ once $\pi$ is fixed. The claim follows readily.

%3. There are no turns $y$ with $\omega_y<\omega_{y^*}$ by maximality of $\pi$ in $\omega$. 
%
%Indeed, for each $y\in{\bf turn}_\pi$ with $\omega_y\ne\omega_{y^*}$ and $\omega'_y=\omega'_{y^*}$, define $\pi^*$ to be the path obtained from $\pi$ by replacing $y$ by $y^*$. Then, $y^*$ is not on a maximal path of $\omega$ (since otherwise $\pi^*$ would be maximal, thus contradicting the maximality of $\pi$) and is on a maximal path of $\omega'$ since $\pi$ is maximal and $\pi$ and $\pi^*$ have the same weight. We deduce that $y\in S$ by the locality of $T$. 
%
\end{proof}

\section{Proof of Theorem~\ref{thm:main}}
 
Recall that, by Corollary~\ref{cor:many turns}, for every $\beta>0$, there exists $\kappa=\kappa(\beta)>0$ such that for every $x\in C_\beta$ with $\|x\|$ large enough,
\begin{equation}\label{eq:many turn}
\bbP(\exists \pi\in \vec\Pi^{(x)}_{\rm max}:|\mathbf{turn}_\pi|< \kappa \|x\|)\le \exp(-\kappa \|x\|).
\end{equation}
The proof of the theorem relies on the following three lemmas. In order to highlight the global strategy of the proof, we postpone the proofs of the lemmas.

The first lemma treats the case in which maximal paths are composed of sites with weight 1 only. This case corresponds to the supercritical oriented percolation case.   \begin{lemma}\label{prop:perco}
For every $\beta>0$, there exists $\delta_1>0$ such that for every $x\in C_\beta$ with $\|x\|$ large enough,
\begin{equation}\label{eq:kjh}\bbP(W_{\rm max}^{(x)}=\|x\|+1 \text{ and }|\vec\Pi^{(x)}_{\rm max}|\le 2^{\delta_1 \|x\|})\le \exp(-\delta_1 \|x\|).\end{equation}
\end{lemma}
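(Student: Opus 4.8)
The statement concerns the supercritical oriented percolation regime: conditionally on $W_{\rm max}^{(x)}=\|x\|+1$, every maximal path is a directed path from $0$ to $x$ all of whose sites carry weight $1$ (i.e.\ an open oriented path in the Bernoulli percolation with parameter $\bbP(\omega_0=1)$). The plan is to reduce the count of maximal paths to a count of open oriented paths and to produce exponentially many of them by rerouting near turns. Concretely, on the event in \eqref{eq:kjh}, combine \eqref{eq:many turn} with the observation that a fixed maximal path $\pi$ (which exists since the event forces $W_{\rm max}^{(x)}$ to be positive and equal to the maximal possible weight along directed paths consisting of $1$'s) has at least $\kappa\|x\|$ turns with overwhelming probability; so we may restrict to the event that some maximal $\pi$ has $\ge\kappa\|x\|$ turns, all sites of $\pi$ equal to $1$.

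\textbf{Main step via the multi-valued map principle.} The idea is to apply Lemma~\ref{principle} with $\calE$ the event in \eqref{eq:kjh} intersected with the many-turns event, $\calE'$ the event $\{W_{\rm max}^{(x)}=\|x\|+1\}$ (or a slight enlargement), $\varepsilon = p$, and $\calF$ the set of pairs $(\pi,S)$ where $\pi$ is a maximal path and $S\subset\mathbf{turn}_\pi$ is a collection of $\lfloor \kappa\|x\|/2\rfloor$ turns that are at $\|\cdot\|$-distance $\ge 2$ from each other (such $S$ exists by the many-turns bound, just as in the proof of Proposition~\ref{lem:bounded above}). For each such $(\pi,S)$, the map $T$ sets $\omega'$ equal to $\omega$ except at the sites $\{y^*: y\in S\}$, where it forces the value $1$; note $\omega'$ still has $W_{\rm max}^{(x)}=\|x\|+1$ since $\pi$ is still an all-ones directed path and the modification cannot create a directed path of larger weight (the maximum possible weight of a length-$\|x\|+1$ directed path is $\|x\|+1$). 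Because the sites $y^*$, $y\in S$, lie at $\|\cdot\|$-distance $\ge 2$ from each other and from $\pi$, the modified configuration $\omega'$ does not depend on the order or on the original values at these sites, and \eqref{eq:condition} holds with $\varepsilon=p$: each of the $|S|\le \|x\|$ coordinates being reset to $1$ costs a factor $\bbP(\omega_0=1)\ge p$, but actually one only needs the bound at the single coordinate level, giving $\bbP(\omega')\ge p\cdot\bbP(\cdots)$ after summing over which original values were changed. One should check $T$ is local in the sense of Definition~\ref{def:1}: item~1 holds because each $y\in S$ becomes a bifurcation (both $\omega'_y=1=\omega'_{y^*}$) and no other bifurcations are destroyed; item~2 holds because any maximal path $\pi'$ in $\omega'$ is again all-ones, so any excursion $\pi'(z_0,z_1)$ either has the same $\omega$-weight as $\pi(z_0,z_1)$ (both consist of $1$'s, same length) or is a single site $y^*$ with $y\in S$ — here the "same length" point is exactly where directedness and the all-ones structure are used, since two directed paths between $z_0$ and $z_1$ have the same number of interior sites only when $\|z_1-z_0\|$ is fixed, which is automatic; more carefully, any excursion of length $\ne 2$ away from $\pi$ would, being all-ones, give weight equal to $\pi(z_0,z_1)$ precisely when it has the same length, and one argues the only new length-varying excursions are the single-site detours $y\mapsto y^*$.

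\textbf{Conclusion and the obstacle.} Given locality, Lemma~\ref{lem:local} bounds $|\calT(\omega')|\le 2^{5\delta\|x\|}$ provided $|S|\le\delta\|x\|$ and $|\Pim(\omega)|\le 2^{\delta\|x\|}$ on $\calE$; meanwhile $|\calS(\omega)|\ge \binom{\lfloor\kappa\|x\|\rfloor}{\lfloor \kappa\|x\|/2\rfloor}\ge 2^{c\kappa\|x\|}$ for some $c>0$ (number of ways to choose the subset $S$; alternatively one takes a single canonical $S$ and instead varies over which of its sites one actually flips, giving $2^{|S|}$ — this is the cleaner route and makes $|\calS(\omega)|\ge 2^{\lfloor\kappa\|x\|/2\rfloor}$). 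Choosing $\delta_1$ small compared to $\kappa$ so that $2^{5\delta_1\|x\|}/(p\,2^{\kappa\|x\|/2}) \le \exp(-\delta_1\|x\|)$ and using $\bbP(\calE')\le 1$, Lemma~\ref{principle} gives $\bbP(\calE)\le \exp(-\delta_1\|x\|)$, and adding back the $\exp(-\kappa\|x\|)$ from \eqref{eq:many turn} finishes the proof. The main obstacle I anticipate is verifying item~2 of locality cleanly: one must rule out that the value-$1$'s newly planted at the $y^*$'s create long new all-ones excursions that are genuinely shorter or longer than the corresponding stretch of $\pi$ — this is where the directedness (fixing the length of a path between two points only up to the $\ell^1$-displacement) and the fact that we only add isolated $1$'s far from $\pi$ and from each other must be combined to show every new excursion is exactly the single detour $y\mapsto y^*$; a careful choice of the separation requirement on $S$ (distance $\ge 2$, and $y^*\notin\pi$, which is part of the definition of $\mathbf{shield}_\pi$) should make this work, but it is the delicate point.
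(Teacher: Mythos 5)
Your plan follows the paper's skeleton (multi-valued map principle, reduce to the all-ones / supercritical oriented-percolation regime, control $|\calT(\omega')|$ via Lemma~\ref{lem:local}), and you correctly identify where the difficulty lies; but the obstacle you flag at the end is real and your proof does not resolve it. The transformation you define --- set $\omega'_{y^*}=1$ for $y\in S$ and change nothing else --- is \emph{not} local in the sense of Definition~\ref{def:1}. Concretely, if some site $z\in\mathbf{shield}_\pi(y)$ (a neighbour of $y^*$ off $\pi$) already has $\omega_z=1$, then in $\omega'$ a maximal path $\pi'$ (which is all-ones in $\omega'$) may enter $y^*$ and continue through $z$; the resulting excursion $\pi'(z_0,z_1)$ has more than one interior site, so it is not $\{y^*\}$, yet its $\omega$-weight is strictly smaller than that of $\pi(z_0,z_1)$ because $\omega_{y^*}<1$. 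Thus item~2 of Definition~\ref{def:1} fails, and the separation constraint you impose on $S$ does nothing to prevent this. The paper's transformation additionally sets $\omega'_z:=0$ for every $z\in\mathbf{shield}_\pi(y)$, $y\in S$. This is precisely the missing ingredient: once the shield of $y^*$ is zeroed, an all-ones path of $\omega'$ that reaches $y^*$ has no choice but to step back onto $\pi$, which forces $\pi'(z_0,z_1)=\{y^*\}$ and hence locality. Without it, the bound $|\calT(\omega')|\le 2^{5\delta\|x\|}$ you invoke from Lemma~\ref{lem:local} is unjustified.

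Two further quantitative points in your write-up also need fixing, though they are secondary. First, $\varepsilon=p$ is not the right constant in \eqref{eq:condition}: changing a set of $k$ coordinates to extreme values costs $\varepsilon=p^{k}$ (and $p^{(2d-1)|S|}$ once shields are included), because the preimage of a fixed $\omega'$ is a set of configurations and the factor accumulates multiplicatively over modified sites. Second, your choice $|S|=\lfloor\kappa\|x\|/2\rfloor$ is incompatible with the hypothesis $|S|\le\delta\|x\|$ of Lemma~\ref{lem:local}; the correct scaling is $|S|=\delta\|x\|$ with $\delta\ll\kappa$, and the entropy that beats $2^{5\delta\|x\|}\cdot p^{-(2d-1)\delta\|x\|}$ comes from $\min_\omega|\calS(\omega)|\ge\binom{(\kappa-\delta)\|x\|}{\delta\|x\|}$, not from taking $S$ large.
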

The proof relies on Lemma~\ref{principle}. The transformation $T$ (see Fig.~\ref{fig:2}) consists in taking a subset $S$ of cardinality $\delta n$ of well-separated turns of $\pi$, and, for every $y\in S$, changing $\omega_{y^*}$ to 1 and $\omega_z$ to 0 for each $z\in{\bf shield}_\pi(y)$. This transformation will be proved to be local, a fact which will allow us to show that the left-hand side of \eqref{eq:kjh} must be small.

The second lemma deals with the case in which maximal paths have weight very close to $\|x\|+1$, meaning that the average weight of sites on them is very close to $1$.
\begin{lemma}\label{prop:intermediate}
There exists $\delta_2>0$ such that for every $x\in \bbZ_+^d$ with $\|x\|$ large enough,
$$\bbP\big(W_{\rm max}^{(x)}>(1-\delta_2)\|x\| \text{ and }|\vec\Pi^{(x)}_{\rm max}|\le 2^{\delta_2 \|x\|}\big)\le \exp(-\delta_2 \|x\|).$$
\end{lemma}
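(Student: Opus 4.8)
The plan is to apply the multi-valued map principle (Lemma~\ref{principle}) with $\calE$ the event $\{W_{\rm max}^{(x)}>(1-\delta_2)\|x\|\text{ and }|\vec\Pi^{(x)}_{\rm max}|\le 2^{\delta_2\|x\|}\}$ (intersected with the event that every maximal path has at least $\kappa\|x\|$ turns, which costs only $\exp(-\kappa\|x\|)$ by Corollary~\ref{cor:many turns}) and $\calE'$ a suitable event whose probability we can control directly. The starting observation is that if $W_{\rm max}^{(x)}>(1-\delta_2)\|x\|$, then any maximal path $\pi$ has at most $\delta_2\|x\|$ sites of weight $0$ — indeed, since $\max\Theta=1$, the weight is at most $\#\{\text{sites of weight }1\text{ on }\pi\}$, so at least $(1-\delta_2)\|x\|$ of the $\|x\|+1$ sites have weight $1$. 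Combined with Corollary~\ref{cor:many turns}, on $\calE$ a maximal path $\pi$ has $\ge\kappa\|x\|$ turns but $\le\delta_2\|x\|$ zeros; choosing $\delta_2$ small compared to $\kappa$, most turns carry weight $1$, and among those we can extract a well-separated subset $S\subset{\bf turn}_\pi$ of size $\delta_2\|x\|$ (say) all of weight $1$, pairwise at $\|\cdot\|$-distance $\ge 3$, such that each $y\in S$ has $\omega_y=1$.

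Now define $T(\omega,\pi,S)=\omega'$ by modifying $\omega$ near each $y\in S$: set $\omega'_{y^*}:=1$ and $\omega'_z:=0$ for every $z\in{\bf shield}_\pi(y)$, leaving $\omega$ unchanged elsewhere. Since $\omega_y=1$ already, rerouting $\pi$ through $y^*$ does not change its weight, so $\pi$ remains maximal in $\omega'$ and each $y\in S$ is now a bifurcation (because $\omega'_y=\omega'_{y^*}=1$). The shield modification is what forces locality: any excursion of a competing maximal path $\pi'$ away from $\pi$ that is not reduced to a single $y^*$ must pass through a site whose weight has been zeroed out, which can only lower its weight and hence cannot occur on a maximal path; thus $T$ is local in the sense of Definition~\ref{def:1}. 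This is essentially the same transformation as in Lemma~\ref{prop:perco}, except that here the modifications are interior to the maximal-weight region rather than using weight-$1$ sites everywhere, so the bookkeeping on excursions must be redone carefully — I expect this locality verification to be the main technical obstacle, just as the analogous step is the crux of Lemma~\ref{prop:perco}.

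With locality in hand, Lemma~\ref{lem:local} gives $|\calT(\omega')|\le 2^{5\delta_2\|x\|}$, while $|\calS(\omega)|\ge\binom{\kappa\|x\|/2}{\delta_2\|x\|}\ge 2^{c\|x\|}$ for an absolute constant $c$ depending only on $\kappa$ (the number of ways to choose the well-separated weight-$1$ turn subset $S$), which for $\delta_2$ small enough exceeds $2^{5\delta_2\|x\|}$ by a fixed exponential factor. The cost $\varepsilon$ of the transformation is $p^{(2d)\delta_2\|x\|}$ — at most $2d$ sites are altered near each of the $\delta_2\|x\|$ elements of $S$, and each alteration prescribes a specific value in $\Theta$, which has probability $\ge p$ — so $\varepsilon=\exp(-C\delta_2\|x\|)$. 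Taking $\calE'$ to be the full probability space (or the image of $T$), Lemma~\ref{principle} yields
\[
\bbP(\calE)\le\frac{2^{5\delta_2\|x\|}}{\varepsilon\cdot 2^{c\|x\|}}\le \exp\big((5\delta_2\log 2+C\delta_2-c)\|x\|\big),
\]
which is $\le\exp(-\delta_2\|x\|)$ once $\delta_2$ is chosen small enough that $c-5\delta_2\log2-C\delta_2>\delta_2$. Adding back the $\exp(-\kappa\|x\|)$ from Corollary~\ref{cor:many turns} and shrinking $\delta_2$ if necessary completes the proof.
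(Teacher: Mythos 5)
Your proposal takes a genuinely different route from the paper, and unfortunately the shortcut you take to avoid the paper's two-step reduction contains a real gap in the locality verification.

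The paper's proof does \emph{not} reuse the shield transformation here. Instead it applies Lemma~\ref{principle} with the much simpler map that raises the weight of every site of $\pi$ to $1$, sending $\calE = \{W_{\rm max}^{(x)}>(1-\delta)n,\ |\vec\Pi^{(x)}_{\rm max}|\le 2^{\delta n}\}$ into $\calE' = \{W_{\rm max}^{(x)}=n+1,\ |\vec\Pi^{(x)}_{\rm max}|\le 2^{\delta n}\}$, then uses Lemma~\ref{prop:perco} to get $\bbP(\calE')\le \exp(-\delta_1 n)$. The exponential decay comes entirely from $\bbP(\calE')$, not from a large $|\calS(\omega)|$ (in fact $|\calS(\omega)|\ge 1$ is all they use); the paper's remark after the proof explicitly stresses that this is essential. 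Moreover the paper separately handles $x\notin C_\beta$ by a direct counting estimate, since Corollary~\ref{cor:many turns} (which you invoke) only applies to $x\in C_\beta$, whereas the lemma is stated for all $x\in\bbZ_+^d$.

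The substantive problem is the assertion that the shield transformation is local. You assert that $\pi$ remains maximal in $\omega'$ because ``rerouting $\pi$ through $y^*$ does not change its weight.'' But maximality of $\pi$ in $\omega'$ requires comparing with \emph{all} competing paths, not just the one obtained by rerouting at $y^*$. Consider a maximal path $\sigma$ in $\omega$ whose excursion from $\pi$ passes through $y^*$ by entering and exiting through shield sites, say $s_{\rm in}\to y^*\to s_{\rm out}$, and suppose $\omega_{s_{\rm in}}=\omega_{y^*}=\omega_{s_{\rm out}}=0$ (a scenario that is geometrically realizable for $d\ge 3$, and even for $d=2$ with a single shield site, and is compatible with $W_{\rm max}^{(x)}>(1-\delta_2)\|x\|$ since we only control the \emph{number} of non-$1$ weights, not their location or the weights of $y^*$/the shield). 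After the transformation, $\omega'_{y^*}=1$ while $\omega'_{s_{\rm in}},\omega'_{s_{\rm out}}$ stay $0$, so $W'_\sigma=W_\sigma+1=W_\pi+1>W'_\pi$ and $\pi$ is no longer maximal. Item 1 of Definition~\ref{def:1} fails, so Lemma~\ref{lem:local} is not available, and the bound $|\calT(\omega')|\le 2^{5\delta_2\|x\|}$ is unjustified. In Lemma~\ref{prop:perco} this pathology is blocked precisely because every site of a maximal path in $\omega'$ has weight $1$, which pins the excursion down to a single $y^*$; once $W_{\rm max}^{(x)}<\|x\|+1$ that argument breaks. The paper's own strategy for controlling the analogous pathology in the final proof of Theorem~\ref{thm:main} (Claim 2) is the $R$-good turn condition, which gives quantitative control on the weight $\pi$ and competing excursions accumulate near the modification; your $S$ has no such property. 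To salvage your route you would need to build an analogue of $R$-goodness into the selection of $S$ and redo the excursion bookkeeping, at which point you would essentially be reproving Lemma~\ref{lem:manyturns} and Claim~2 — which is exactly what the paper's indirection through the $W_{\rm max}=n+1$ case is designed to avoid.

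Minor point: the cost per element of $S$ is $p^{2d-1}$ (one site raised plus at most $2d-2$ shield sites lowered), not $p^{2d}$; this does not affect the conclusion.
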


The proof relies again on Lemma~\ref{principle} combined with Lemma~\ref{prop:perco}. This time, the transformation simply consists in turning all the weights of $\pi$ to 1.

We are now getting closer to the end of the proof since we only need to tackle the case in which $W_{\rm max}^{(x)}\le (1-\delta_2) \|x\|$. In order to do this, we will need a slightly stronger notion of turn.

\begin{definition}Let $R>0$ be an integer. Consider a path $\pi$ from $0$ to $x$. A turn $y=\pi_{k_0}\in \mathbf{turn}_\pi$ is {\em $R$-good} if $R<k_0<\|x\|-R$ and
 $$\sum_{k=k_0-R}^{k_0-1}\omega_{\pi_k}< R-1\qquad\text{and}\qquad\sum_{k=k_0+1}^{k_0+R}\omega_{\pi_k}< R-1.$$

\end{definition}
In words, an $R$-good turn $y$ is a turn such that the path does not gather too high weight in the $R$ steps preceding and following it. Let $\mathbf{gturn}_\pi$ be the set of $R$-good turns of $\pi$.

The third lemma shows that a good proportion of turns are in fact $R$-good. The proof relies heavily on Lemma~\ref{prop:intermediate}. 
\begin{lemma}\label{lem:manyturns}
For every $\beta>0$, there exist $R>0$ and $\delta_3>0$ such that for every $x\in C_\beta$ with $\|x\|$ large enough,
$$\bbP\big(|\vec\Pi^{(x)}_{\rm max}|\le 2^{\delta_3\|x\|}\text{ and  } \exists\pi\in \vec\Pi^{(x)}_{\rm max}:|\mathbf{gturn}_\pi|<\delta_3 \|x\|\big)\le \exp(-\delta_3\|x\|).$$
\end{lemma}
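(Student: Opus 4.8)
\textbf{Proof proposal for Lemma~\ref{lem:manyturns}.}

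The plan is to use the multi-valued map principle (Lemma~\ref{principle}) together with Lemma~\ref{prop:intermediate}, in the regime $W_{\rm max}^{(x)}\le(1-\delta_2)\|x\|$, which is the only case left to treat since Lemma~\ref{prop:intermediate} already disposes of $W_{\rm max}^{(x)}>(1-\delta_2)\|x\|$. So fix $\delta_2$ from Lemma~\ref{prop:intermediate}, and let $\calE$ be the event that $|\vec\Pi^{(x)}_{\rm max}|\le 2^{\delta_3\|x\|}$, that there is a maximal path $\pi$ with $|\mathbf{turn}_\pi|\ge\kappa\|x\|$ (using \eqref{eq:many turn}, which costs only $\exp(-\kappa\|x\|)$ to assume) but $|\mathbf{gturn}_\pi|<\delta_3\|x\|$, and that $W_{\rm max}^{(x)}\le(1-\delta_2)\|x\|$. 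On $\calE$, the path $\pi$ has at least $\kappa\|x\|-\delta_3\|x\|\ge (\kappa/2)\|x\|$ turns that are \emph{not} $R$-good, meaning that for each such turn $y=\pi_{k_0}$ either the $R$ sites preceding it or the $R$ sites following it carry weight $\ge R-1$, i.e.\ at least $R-1$ of those $R$ consecutive sites have weight $1$ (recall $\Theta\subset[0,1]$ with $\min\Theta=0,\max\Theta=1$; and we may take $R$ large, to be fixed at the end).

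The transformation $T$ I would use: pick a subset $S$ of these not-$R$-good turns, of size $|S|\le\delta_3\|x\|$, chosen well-separated (at $\|\cdot\|$-distance $\gg R$ from one another), and for each $y\in S$ lower to $0$ all the weights on the offending block of $R$ sites adjacent to $y$ along $\pi$ (the side on which the weight was $\ge R-1$), while leaving everything else unchanged; equivalently $T$ ``flattens'' a length-$R$ stretch of $\pi$ near each selected turn. The point is twofold. First, this strictly decreases $W_\pi$ by at least $R-1$ near each $y\in S$, so in $\omega'=T(\omega,\pi,S)$ the path $\pi$ is \emph{very far} from maximal; but the key observation is the opposite comparison: after flattening, the resulting environment has \emph{larger} maximal weight along paths that exploit these cheap stretches, so that the event $\{W_{\rm max}^{(x)}>(1-\delta_2)\|x\|\}$ for $\omega'$ becomes likely — this is how we plug into Lemma~\ref{prop:intermediate}. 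Concretely I want to arrange $\calE'\subset\{W_{\rm max}^{(x)}>(1-\delta_2)\|x\|\}$, or rather its complement, so that Lemma~\ref{prop:intermediate} bounds $\bbP(\calE')$. The probabilistic cost condition \eqref{eq:condition} holds because flipping $\le R\cdot\delta_3\|x\|$ weights to $0$ changes the configuration probability by a factor at least $p^{R\delta_3\|x\|}=2^{-O(\delta_3\|x\|)}$, giving $\varepsilon=p^{R\delta_3\|x\|}$.

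The main work — and the step I expect to be the real obstacle — is checking that $T$ is \emph{local} in the sense of Definition~\ref{def:1}, so that Lemma~\ref{lem:local} applies and yields $|\calT(\omega')|\le 2^{5\delta_3\|x\|}$; and, paired with it, controlling $|\calS(\omega)|$ from below. For the lower bound on $|\calS(\omega)|$, since there are $\ge(\kappa/2)\|x\|$ not-$R$-good turns and we only need to choose $\delta_3\|x\|$ of them subject to a separation constraint, a standard counting argument gives $|\calS(\omega)|\ge\binom{c\|x\|}{\delta_3\|x\|}\ge 2^{c'\|x\|}$ for a constant $c'>0$ provided $\delta_3$ is small relative to $\kappa$ and $R$. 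Then Lemma~\ref{principle} gives
\[
\bbP(\calE)\le\frac{2^{5\delta_3\|x\|}}{p^{R\delta_3\|x\|}\,2^{c'\|x\|}}\,\bbP(\calE')\le 2^{(5\delta_3-c')\|x\|+O(R\delta_3\|x\|)}\exp(-\delta_2\|x\|),
\]
which is $\le\exp(-\delta_3\|x\|)$ once $R$ is fixed and then $\delta_3$ is taken small enough (and small relative to $\delta_2,\kappa$). The delicate point in the locality verification is that flattening a length-$R$ stretch of $\pi$ does not merely create single-site bifurcations (as in Lemma~\ref{prop:perco}) but a longer cheap corridor; one must argue that any maximal path $\pi'$ in $\omega'$ that deviates from $\pi$ over an excursion $\pi'(z_0,z_1)$ either has the same $\omega$-weight on that excursion as $\pi$ does, or picks up the flattened stretch — here one uses that outside the $S$-neighborhoods nothing changed, that the stretches are well-separated, and that the budget $(1-\delta_2)\|x\|$ forces $\pi'$ to follow $\pi$ except possibly through these engineered corridors. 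This is where I anticipate needing to be careful about the precise definition of $S$ and of the modification (e.g.\ flattening slightly more than $R$ sites, or also zeroing a ``shield'', to make excursions truly forced), mirroring the technical care already taken in Lemma~\ref{prop:perco}.
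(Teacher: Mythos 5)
Your proposal diverges from the paper's argument at a fundamental level, and the map you sketch contains a directional error that cannot be repaired by the locality check you flag as the delicate step. In DLPP, weights are being \emph{maximized}: lowering a length-$R$ stretch of $\pi$ to weight $0$ can only \emph{decrease} $W_{\rm max}^{(x)}$, so $\omega' = T(\omega,\pi,S)$ still satisfies $W_{\rm max}^{(x)}(\omega') \le W_{\rm max}^{(x)}(\omega) \le (1-\delta_2)\|x\|$ and lands \emph{outside} the regime where Lemma~\ref{prop:intermediate} says anything (you appear to have slipped into first-passage intuition, where cheap stretches help a path). Your account of the target event compounds this: you want $\calE'$ to be related to $\{W_{\rm max}^{(x)}>(1-\delta_2)\|x\|\}$, you say this should ``become likely'' after the transformation, and you also want to bound $\bbP(\calE')$ by the exponentially small estimate of Lemma~\ref{prop:intermediate}; it cannot be both. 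The observation you start from --- that each not-$R$-good turn has a nearly-full-weight length-$R$ stretch of $\pi$ next to it, which is exactly the rare structure detected by Lemma~\ref{prop:intermediate} --- is correct and is indeed the engine of the proof, but the multi-valued map is the wrong vehicle for it.

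The paper does \emph{not} invoke Lemma~\ref{principle} for this lemma. It partitions $\bbZ_+^d$ into slabs $T_j$ of width $r$ (with $r$ large enough that $r-1 > (1-\delta_2)(r+1)$), and observes that a not-$R$-good turn, with $R=2r+1$, forces an adjacent slab to carry a $\pi$-segment of weight $\ge r-1$. Conditioning on the entry/exit points $\pi_{(j-1)r},\pi_{jr}$ of $\pi$ at the slab boundaries, translation invariance and Lemma~\ref{prop:intermediate} applied at scale $r$ show that a given slab having such a high-weight segment \emph{and} few local maximal paths has probability $\le e^{-\delta_2 r}$. Since the slab events depend on disjoint sets of weights, a union bound over the at most $r^{(d-1)m}$ choices of boundary points and $2^m$ choices of which slabs are bad shows that a $\kappa/4$-fraction of slabs being simultaneously bad is exponentially unlikely (once $r$ is large). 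On the complementary event, a direct count finishes: turns in slabs adjacent to a high-weight-and-few-local-paths slab number at most $\kappa n/2$; turns in slabs adjacent to a slab with $>2^{\delta_2 r}$ local maximal paths number at most $(2\delta/\delta_2)(n+r)$, since each such slab multiplies $|\vec\Pi_{\rm max}^{(x)}|$ by $2^{\delta_2 r}$ and we are on $\{|\vec\Pi_{\rm max}^{(x)}|\le 2^{\delta n}\}$; turns near the endpoints number $\le 4r$. The remainder, at least $\delta n$ of the $\kappa n$ turns, must be $R$-good. No map $T$, no $\calS$, $\calT$, no locality check; the independence across slabs replaces the multi-valued map bookkeeping entirely.
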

Before proving all these lemmas, let us briefly mention how we will conclude the proof (the proof is postponed to the end of the section). The end of the proof is also relying on Lemma~\ref{principle}. In this case, the transformation consists in picking a subset $S$ of cardinality $\delta n$ of the $R$-good turns of $\pi$, and for each $z$ within distance $R$ of $y\in S$, increasing $\omega_z$  to $1$ if $z\in \pi\cup\{y^*\}$, and decreasing $\omega_z$ to $0$ otherwise (see Fig.~\ref{fig:4}). This creates a bifurcation at $y$, and the choice of $R$ guarantees that the transformation is local. 

We now focus on the proofs of the different statements mentioned above.

\begin{figure}
\centerline{\includegraphics[width=0.15\textwidth]{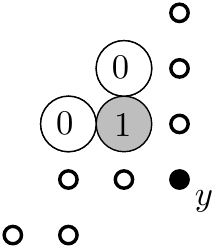}}
\caption{\label{fig:2}
The transformation used in Lemma~\ref{prop:perco} zoomed near a site $y\in S$: the weight of $y^*$ is increased to 1, and the ones of ${\bf shield}_\pi(y)$ set to $0$. The small circles correspond to points of $\pi$ whose weight remains unchanged.}
\end{figure}

\begin{proof}[Lemma~\ref{prop:perco}]
Fix $\beta>0$. Let $\kappa$ be given by \eqref{eq:many turn} and fix $0<\delta\ll \kappa$ to be chosen later. Consider $x\in C_\beta$ with $\|x\|$ large enough. To simplify the notation, we write $n:=\|x\|$. We apply Lemma~\ref{principle} to 
\begin{enumerate}
\item  $\calE=\{W_{\rm max}^{(x)}= n+1\}\cap\{|\vec\Pi^{(x)}_{\rm max}|\le 2^{\delta n}\}\cap\{\forall\pi \in \vec\Pi^{(x)}_{\rm max},|\mathbf{turn}_\pi|\ge \kappa n\},$
\item $\calE'$ is the full probability space, 
\item $\mathcal S(\omega)$ is the set of $(\pi,S)$ for which $\pi\in\Pim(\omega)$ and $S\subset \mathbf{turn}_{\pi}$ is such that $|S|=\delta n$ and any two sites of $S$ are at $\|\cdot\|-$distance at least $2$ of each other,
\item $\displaystyle T(\omega,\pi,S)_z:=\begin{cases}1&\text{ if }z=y^*\text{ for some }y\in S,\\
0&\text{ if }z\in\mathbf{shield}(y)\text{ for some }y\in S,\\
\omega_z&\text{ otherwise}.\end{cases}$
\end{enumerate}
The first two cases in the definition of $T(\omega,\pi,S)$ are never in conflict. Indeed, for every $y\in S$, $\|y^*\|=\|y\|$ and $\|z\|=\|y\|\pm1$ if $z\in\mathbf{shield}(y)$. Furthermore, the $\ell^1$ distance between two distinct elements of $S$ is larger than or equal to 2. 

Equation \eqref{eq:principle} implies that for $n$ large enough, 
\begin{equation}\label{eq:1a}
\bbP(\calE)\le \frac{2^{5\delta n}}{p^{(2d-1)\delta n}\binom{\kappa n-\delta n}{\delta n}}\le \frac{2^{5\delta n}}{p^{(2d-1)\delta n}(\tfrac{\kappa-\delta}{\delta})^{\delta n}},\end{equation}
where we used that
\begin{itemize}
\item $\bbP(\calE')\le 1$, 
\item $|\calS(\omega)|\ge \binom{\kappa n-\delta n}{\delta n}$ for every $\omega\in\calE$ (the term accounts for the number of possible $S$, which is larger than $\binom{\kappa n-\delta n}{\delta n}$ since $\pi\in\vec\Pi_{\rm max}^{(x)}(\omega)$ has more than $\kappa n$ turns. Note that we did not use the fact that there are a priori $|\vec\Pi_{\rm max}^{(x)}(\omega)|$ choices for $\pi$, since this number may be as small as 1), 
\item  \eqref{eq:condition} is satisfied with $\varepsilon=p^{(2d-1)\delta n}$ (since $\delta n$ sites are changed to 1, and at most $(2d-2)\delta n$ to $0$),
\item $|\calT(\omega')|\le 2^{5\delta n}$ for every $\omega'\in\calE'$, as shown using Lemma~\ref{lem:local} and the following claim:
\end{itemize}
\noindent{\bf Claim 1} {\em The transformation $T$ is local.}\medbreak
\noindent{\em Proof of Claim 1.} 
Let $\omega'=T(\omega,\pi,S)\in \calE'$. First, $\pi$ is obviously in $\Pim(\omega')$ and the set of bifurcations of $\pi$ in $\omega$ is the union of $S$ and the set of bifurcations of $\pi$ in $\omega$.

Second, pick a maximal path $\pi'$ in $\omega'$ and $z_0,z_1\in\pi$ with $\pi\cap\pi'[z_0,z_1]=\{z_0,z_1\}$ and $W_{\pi'(z_0,z_1)}<W_{\pi(z_0,z_1)}$. We wish to show that $\pi'(z_0,z_1)=\{y^*\}$ for some $y\in S$, which will conclude the proof. 

Since $W_{\pi'(z_0,z_1)}<W_{\pi(z_0,z_1)}$, $\pi'(z_0,z_1)$ must contain a site $u$ with $\omega_u<1$. Since $\pi'$ is maximal for $\omega'$, it is made of sites with weight 1 only, so that $\omega'_u=1$. The constraint $\omega_u<\omega'_u=1$ implies that $u=y^*$ for some $y\in S$. Now, none of the sites of  ${\bf shield}_y$ can be in $\pi'(z_0,z_1)$ since their weight in $\omega'$ is strictly smaller than 1. This implies that the sites of $\pi'(z_0,z_1)$ before and after $u$ must be in $\pi$, i.e., must be equal to $z_0$ and $z_1$ respectively. In conclusion, $\pi'(z_0,z_1)=\{y^*\}$. 
$\square$
\medbreak
We are now ready to finish the proof. 
Provided that $\delta\ll \kappa$, \eqref{eq:1a} and \eqref{eq:many turn} give that for $n$ large enough,
\begin{align*}\bbP\big(W_{\rm max}^{(x)}=n+1\text{ and }|\vec\Pi^{(x)}_{\rm max}|\le 2^{\delta n}\big)&\le \bbP(\mathcal E)+\bbP\big(\exists\pi \in \vec\Pi^{(x)}_{\rm max},|\mathbf{turn}_\pi|< \kappa n\big)\\
&\le \exp(-\delta n)+\exp(-\kappa n),
\end{align*}
which implies the statement readily by choosing $\delta_1>0$ small enough.
 \end{proof}
 
\begin{proof}[Lemma~\ref{prop:intermediate}]Consider $c>0$ such that the second largest element of $\Theta$ is smaller than $1-c$, and recall that the probability that $\omega_x=1$ is smaller or equal to $1-p$. Let $\delta\ll\beta\ll1$ be two positive numbers to be fixed later. Consider $x\in\bbZ^d_+$ and, as before, set $n:=\|x\|$. 
Notice that a path $\pi$ (from 0 to $x$) with $W_\pi\ge (1-\delta) n$ contains at most $\delta n/c+1$ sites with weights strictly smaller than 1 (and, therefore, at least $(1-\delta/c)n$ sites with weights equal to 1).

Let us first assume that $x\notin C_\beta$. In such case, there are at most $(d-1)^{\beta n+1}\binom {n+1}{\beta n+1}$ oriented paths from 0 to $x$. Since the probability, for each such path, of having at least $(1-\delta/c)n$ sites with weights equal to 1 is smaller than $\binom{n+1}{(1-\delta/c)n}(1-p)^{(1-\delta/c)n}$, we deduce that for $\|x\|$ large enough,
\begin{equation}\label{eq:lj}\bbP\big(W_{\rm max}^{(x)}>(1-\delta)n\big)\le (d-1)^{\beta n+1}\binom {n+1}{\beta n+1}\binom{n+1}{(1-\delta/c)n}(1-p)^{(1-\delta/c)n}\le \exp(-\delta n),\end{equation}
provided that $\beta=\beta(p,c)>0$ and $\delta=\delta(\beta,p,c)$ are small enough. 

In the second part of the proof, consider the constant $\beta$ defined in the last paragraph. Let us assume that $x\in C_\beta$ with $\|x\|$ large enough.  Let $\delta_1=\delta_1(\beta)>0$ be as in Lemma~\ref{prop:perco} and assume that $\delta\ll\delta_1$.
We apply Lemma~\ref{principle} to 
\begin{enumerate}
\item $\mathcal E=\{W_{\rm max}^{(x)}>(1-\delta)n\}\cap\{|\vec\Pi^{(x)}_{\rm max}|\le 2^{\delta n}\}$, 
\item $\calE'=\{W_{\rm max}^{(x)}=n+1\text{ and }|\vec\Pi^{(x)}_{\rm max}|\le 2^{\delta n}\}$,
\item $\calS(\omega)=\{(\pi,S):\pi\in\Pim(\omega)\text{ and }S=\{y\in\pi:\omega_y<1\}\}$, 
\item $\displaystyle T(\omega,\pi,S)_z=\begin{cases} 1&\text{ if }z\in S,\\
\omega_z&\text{ otherwise}.\end{cases}$\end{enumerate} 
Then, \eqref{eq:principle} implies that 
\begin{equation}\label{eq:1b}\bbP(\calE)\le \frac{2^{\delta n}\,\binom{n+1}{\delta n/c+1}}{p^{\delta n/c+1}}\exp(-\delta_1 n),\end{equation}
where we used that
\begin{itemize}
\item $\bbP(\calE')\le\exp(-\delta_1 n)
$
by Lemma~\ref{prop:perco},
\item $|\calS(\omega)
|\ge1$, 
\item \eqref{eq:condition} is satisfied with $\varepsilon=p^{\delta n/c+1}$ since the weights of at most $\delta n/c+1$ sites are changed to 1,
\item  
$
|\calT(\omega')|\le 2^{\delta n}\binom{n+1}{\delta n/c+1}
$ for every $\omega'\in\calE'$
(we used the fact that maximal paths in $\omega'=T(\omega,\pi,S)$ are maximal paths in $\omega$, so that $|\vec\Pi^{(x)}_{\rm max}(\omega')|\le 2^{\delta n}$, and that $S$ is a subset of $\pi$ with $|S|<\delta n/c+1$).
 \end{itemize}
Then, \eqref{eq:1b}  implies that for $n$ large enough,
\begin{align*}\bbP\big(W_{\rm max}^{(x)}>(1-\delta)n\text{ and }|\vec\Pi^{(x)}_{\rm max}|\le 2^{\delta n}\big)
&\le \exp(-\delta n)\end{align*}
by choosing $\delta=\delta(\delta_1,\beta)>0$ small enough. 

Combining this statement with \eqref{eq:lj} concludes the proof.
\end{proof} 

%\begin{figure}
%\centerline{\includegraphics[width=0.15\textwidth]{transformations3.pdf}}
%\caption{\label{fig:3}
%The transformation used in Lemma~\ref{prop:intermediate}: the weights on $\pi$ which are not 1 are turned to 1. The weights of small circles corresponding to points of $\pi$ whose weight is already equal to 1 remain unchanged.}
%\end{figure}

\begin{remark}Note that in the last proof, the lower bound $|\mathcal S(\omega)|\ge1$ does not counterbalance the upper bound on $\calT(\omega')$ on its own, so that the bound $\bbP(\calE')\le \exp(-\delta_1 n)$ given by the previous lemma is necessary. \end{remark}

\begin{proof}[Lemma~\ref{lem:manyturns}]
Fix $\beta>0$. Let $\delta_2$ be given by Lemma~\ref{prop:intermediate} and $\kappa$ by \eqref{eq:many turn}. Consider $x\in C_\beta$ with $\|x\|$ large enough. As before, we set $n:=\|x\|$. Consider $r>0$ to be fixed later and set $m=\lfloor n/r\rfloor$. For every $j\ge1$, introduce 
$$T_{j}:=\{y\in\bbZ^d_+:(j-1)r\le \|y\|<jr\}.$$
Let $\calJ$ be the set of $J\subset\{1,\dots,m\}$ with $|J|>\tfrac\kappa4 m$. Let $\calX$ be the set of $x_0,x_1,\dots,x_m$ such that $x_0=0$ and for every $i\in\{1,\dots,m\}$, there exists an oriented path of length $r+1$ from $x_{i-1}$ to $x_i$ (recall that the length of a path from 0 to $y$ is $\|y\|+1$).  
\medbreak
Let $\calE$ be the event that there exist $(x_0,\dots,x_m)\in \calX$ and a set $J\in \calJ$  such that for every $j\in J$, \begin{itemize}
\item the number of maximal paths from $x_{j-1}$ to $x_j$ is smaller than $2^{\delta_2 r}$,
\item the maximum $W^{x_{j-1},x_j}$ of the $W_{\pi\cap T_j}$ over oriented paths $\pi$ from $x_{j-1}$ to $x_j$ is greater than or equal to $r-1$.\end{itemize}
For each $j$, the two items above depend on weights in $T_j$ only. Since the $T_j$ are disjoint, the union bound and independence imply that 
\begin{align*}\bbP(\calE)&\le \sum_{\substack{(x_0,\dots,x_{m})\in\calX\\ J\in\calJ}}\ \prod_{j\in J}\bbP\big(W_{\rm max}^{(x_{j}-x_{j-1})}\ge r-1 \text{ and }|\vec\Pi_{\rm max}^{(x_{j}-x_{j-1})
}|\le 2^{\delta_2r}\big),\end{align*}
%When $\calE$ happen, there must exist $m$ points $x_1,\dots,x_m$ corresponding to $\pi_{r},\dots,\pi_{rm}$ (set $x_0=0$) such that for every $j\in J$, 
%$$\max\{W_{\pi'}:\pi'\text{ from }x_{j-1}\text{ to }x_j\}\ge W_{\pi\cap T_j} +\omega_{x_j}.$$
%But not that the left-hand side being larger than the right-hand side is a condition which is depending on weights in $T_j$ only. Using the fact that the $T_j$ are disjoint, we deduce from the union bound and independence that
%\begin{align*}\bbP(\calE)&\le \sum_{x_1,\dots,x_{m}}\sum_{J}\prod_{j\in J}\bbP\big(W_{\rm max}^{(x_{j}-x_{j-1})}\ge r-1 \text{ and }|\vec\Pi_{\rm max}^{(x_{j}-x_{j-1})
%}|\le 2^{\delta_2r}\big),\end{align*}
where we used the invariance under translations and the fact that $W^{x_{j-1},x_j}$ is smaller than or equal to the maximal weight of oriented paths from $x_{j-1}$ to $x_j$.

Assume now that $r$ is so large that $r-1>(1-\delta_2)(r+1)$. Lemma~\ref{prop:intermediate} implies that 
$$\bbP(\calE)\le \sum_{\substack{(x_0,\dots,x_{m})\in\calX\\ J\in\calJ}}\ \prod_{j\in J}e^{-\delta_2 r}\le r^{(d-1)m }\cdot2^me^{-\delta_2 r\tfrac\kappa4 m}.$$
In the second inequality, we bounded the cardinality of $\calJ$ by $2^m$, and the one of $\calX$ by $r^{(d-1)m }$ (there are at most $r^{d-1}$ choices for each $x_{i}$ since $x_i$ should be reachable from $x_{i-1}$ using a directed path of length $r+1$). 

Provided that $r$ is large enough, we find that $2r^{d-1}e^{-\delta_2 r\kappa/4}<1$, so that there exists $\delta>0$ such that for every $n$ large enough,
\begin{align}\label{eq:2d}\bbP(\calE)\le \exp(-\delta n).\end{align}

Now, fix $\delta=\delta(\delta_2)>0$ so small that  $\kappa> 4(1+2/\delta_2)\delta$. Define the event
$$\calG:=\calE^c\cap\{|\vec\Pi^{(x)}_{\rm max}|\le 2^{\delta n}\}\cap\{\forall \pi\in\vec\Pi_{\rm max}^{(x)}:|{\bf turn}_\pi|\ge \kappa n\}.$$
We wish to show that on the event $\calG$, maximal paths possess many $R$-good turns. Consider $\omega\in\calG$ and $\pi$ a maximal path in $\omega$ from 0 to $x$. Then,
\begin{itemize}
\item Since $\calG\subset\calE^c$, there are at most $\tfrac\kappa2m$ slabs $T_j$ with either $T_{j+1}$ or $T_{j-1}$ satisfying that $W_{\pi\cap T_{j\pm1}}\ge r-1$ and the number of maximal paths from $\pi_{(j\pm1-1)r}$ to $\pi_{(j\pm1)r}$ is smaller than or equal to $2^{\delta_2r}$, and therefore at most $\tfrac\kappa2n$ turns within them. 
\item Since $\calG\subset \{|\vec\Pi^{(x)}_{\rm max}|\le 2^{\delta n}\}$, there are at most $(2\delta/\delta_2)(m+1)$ slabs $T_j$ with either $T_{j+1}$ or $T_{j-1}$ such that the number of maximal paths from $\pi_{(j\pm 1-1)r}$ to $\pi_{(j\pm1)r}$ is larger than $2^{\delta_2r}$, and therefore at most $(2\delta/\delta_2)(n+r)$ turns within them.
\item There are at most $4r$ turns that are at a distance smaller than or equal to $2r$ from the endpoints.
\end{itemize}
The choice of $\delta$ implies that for $n$ large enough, at least $\delta n$ turns are in none of the previous cases since
$$(\kappa-\tfrac\kappa2-2\delta/\delta_2)n-(4+2\delta/\delta_2)r>\delta n.$$In particular, these turns are necessarily $R$-good for $R=2r+1$. In conclusion, \eqref{eq:2d} and \eqref{eq:many turn} imply
\begin{align*}\bbP\big(|\vec\Pi^{(x)}_{\rm max}|\le 2^{\delta n}\text{ and  } \exists\pi\in \vec\Pi^{(x)}_{\rm max}:|\mathbf{gturn}_\pi|<\delta n\big)&\le\bbP(\calE)+\bbP(\exists \pi\in\vec\Pi_{\rm max}^{(x)}:|{\bf turn}_\pi|< \kappa n)\\
&\le \exp(-\delta n)+\exp(-\kappa n),\end{align*}
 which concludes the proof by setting $\delta_3>0$ small enough.
\end{proof}

\begin{remark}
The previous lemma requires to study point-to-point maximal paths, which was one of our motivation to work in this context rather than with point-to-anywhere maximal paths.
\end{remark}
We are now ready to prove Theorem~\ref{thm:main}. 

\begin{figure}
\centerline{\includegraphics[width=0.25\textwidth]{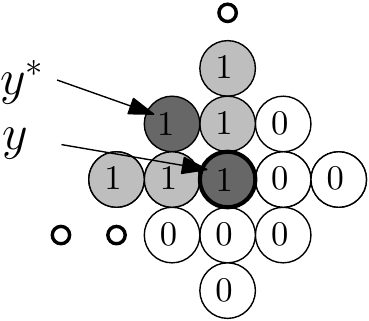}}
\caption{\label{fig:4}
The transformation used in the proof of Theorem~\ref{thm:main}: within the box of size $R$ around $y$, the weights on $\pi\cup\{y^*\}$ are turned to $1$, the others to $0$.}
\end{figure}

\begin{proof}[Theorem~\ref{thm:main}]
%As explained in the previous lemma,  there exist $k_0$ such that for every $k\ge k_0$, $\pi\in \vec\Pi^{(x)}_{\rm max}$ has more than $\delta_2\kappa n/(4k)$ $R$-good slabs with very high probability. Choose $k$ large enough that there exists $\theta_0\in \Theta$ satisfying
%$$\theta_0>1- \frac{\delta_2^2}{8}+1/k.$$
%Let $\pi^{kj}$ be the intersection of $\pi$ with the hyperplane
%$$\big\{x=(x_1,\dots,x_d)\in \bbZ^d:\sum_{i=1}^dx_i=kj\big\}.$$ 
Fix $\beta>0$. %It corresponds to the $kj$-th site on the path. 
Let $\delta\ll \delta_3$ be fixed later. As usual, set $n:=\|x\|$. Fix $R$ and $\delta_3>0$ given by the previous lemma. 

We wish to apply Lemma~\ref{principle}
 to 
\begin{enumerate}
\item $\calE=\{|\vec\Pi^{(x)}_{\rm max}|\le 2^{\delta n}\}\cap\{\forall \pi\in \vec\Pi^{(x)}_{\rm max}:|\mathbf{gturn}_\pi|\ge \textstyle\delta_3n\},$
\item $\calE'$ is the full probability space,
\item $\mathcal S(\omega)$ is the set of $(\pi,S)$ for which $\pi\in\Pim(\omega)$ and $S\subset \mathbf{gturn}_{\pi}$ is such that $|S|=\delta n$ and any two sites of $S$ are at a distance at least $2R+1$ of each other,
\item $\displaystyle T(\omega,\pi,S)_z=\begin{cases}
0&\text{ if for some }y\in S,\ z\notin\pi\cup\{y^*\}\text{ and }\|z-y\|\le R,\\
1&\text{ if for some }y\in S,\ z\in \pi\cup\{y^*\}\text{ and }\|z-y\|\le R,\\
\omega_z&\text{ if for all }y\in S,\ \|z-y\|> R.\end{cases}$
 \end{enumerate}
%In words, all sites on the path have mass $\theta_1$, plus the site $x^*$ for the first $R$-good turn in the slab. 
(See Fig.~\ref{fig:4}.) Again, the first two  cases of the definition of $T(\omega,\pi,S)$ are never in conflict. Then, \eqref{eq:principle} shows that 
\begin{equation}\label{eq:aab}\bbP(\calE)\le \frac{2^{5\delta  n}}{p^{(2R+1)
^d\delta n}\binom{\delta_3 n/(2R+1)}{\delta n}}\le \frac{2^{5\delta  n}}{p^{(2R+1)
^d\delta n}(\tfrac{\delta_3/(2R+1)}{\delta})^{\delta n}} ,\end{equation}
where we used that
\begin{itemize}
\item $\bbP(\calE')\le 1$,
\item  $|\calS(\omega)|\ge \binom{\delta_3 n/(2R+1)}{\delta n}$ for every $\omega\in \calE$ (this lower bound is obtained by first determining a subset of $\mathbf{gturn}_\pi$ containing $\delta_3n/(2R+1)$ turns at a distance at least $2R$ of each other, and then choosing one of its $\binom{\delta_3 n/(2R+1)}{\delta n}$ subsets of cardinality $\delta  n$),
\item \eqref{eq:condition} is satisfied with $\varepsilon= p^{(2R+1)^d\delta n}$ (we changed at most $(2R+1)^d\delta n$ weights),
\item $|\calT(\omega')|\le 2^{5\delta  n}$ for every $\omega'\in \calE'$, as shown using Lemma~\ref{lem:local} and the following claim:
\end{itemize}
%Note that if $(\omega,\omega')\in \overline\calC$, then on $R_j$ with $j\in S$, masses in $\omega'$ are larger than or equal to masses in $\omega$ when we are on the path, and they are smaller or equal everywhere else except next to the first $R$-good turn.
\medbreak 
\noindent{\bf Claim 2} {\em The transformation $T$ is local.}
\medbreak
Before proving the claim, let us finish the proof of the theorem.
%The end of the proof of $|\calT(\omega')|\le 2^{\delta  n}\binom{3\delta n}{\delta n}$ is exactly the same as in the proof of Lemma~\ref{prop:perco}. 
Provided that $\delta\ll\delta_3$, Lemma~\ref{lem:manyturns} and \eqref{eq:aab} imply that for $n$ large enough,
\begin{align*}\bbP(|\vec\Pi^{(x)}_{\rm max}|\le 2^{\delta  n})&\le \bbP(\calE)+\bbP(|\vec\Pi^{(x)}_{\rm max}|\le 2^{\delta_3n}\text{ and  } \exists\pi\in \vec\Pi^{(x)}_{\rm max}:|\mathbf{gturn}_\pi|<\delta_3 n)\\
&\le \exp(-\delta n)+\exp(-\delta_3 n),\end{align*}
which implies the theorem readily by fixing $\delta$ small enough. We therefore simply need to prove the claim to conclude the proof.
\medbreak
\noindent{\em Proof of Claim 2.} 
Let $\omega'=T(\omega,\pi,S)\in \calE'$. 
Pick a maximal path $\pi'$ in $\omega'$. Consider $z_0,z_1\in\pi$ with $\pi\cap\pi'[z_0,z_1]=\{z_0,z_1\}$. We wish to show that $W_{\pi'(z_0,z_1)}'\le W_{\pi(z_0,z_1)}'$ and that if $W_{\pi'(z_0,z_1)}<W_{\pi(z_0,z_1)}$, then  $\pi'(z_0,z_1)=\{y^*\}$ for some $y\in S$.

This will immediately imply that $T$ is local. Indeed, the first item of locality follows from the fact that $\pi$ is maximal in $\omega'$ and that a turn $z\in\pi$ which is a bifurcation of $\pi'$ but not a bifurcation of $\pi$ must be in $S$. To show this last fact, note that the ball (for the $\ell^1$ norm) of radius $R$ around a vertex $y\in S$ contains $z$ if and only if it contains $z^*$, so that the weight of a turn $z\in \pi$ is redefined if and only if the one of $z^*$ is, and that in such case $\omega'_z\ne\omega'_{z^*}$ if $z\notin S$. The second item of locality follows readily from the last claim of the last paragraph.

In order to prove that claim, let us consider two cases. In the first one, there does not exist $y\in S$ such that $y^*\in\pi'(z_0,z_1)$. The weights in $\omega'$ of sites of $\pi'(z_0,z_1)$ are then smaller or equal to those in $\omega$, so that
$$ W'_{\pi'(z_0,z_1)}\le W_{\pi'(z_0,z_1)}\le W_{\pi(z_0,z_1)}\le W'_{\pi(z_0,z_1)}.$$
The second inequality is due to the fact that $\pi$ is maximal in $\omega$. Note that if $W_{\pi'(z_0,z_1)}<W_{\pi(z_0,z_1)}$, the middle inequality is strict (a fact which contradicts the maximality of $\pi'$ in $\omega'$) so that we cannot be in this case.

In the second case, there exists $y\in S$ such that $y^*\in\pi'(z_0,z_1)$. We wish to show that $\pi'(z_0,z_1)=\{y^*\}$. Note that since $\omega'_y=\omega'_{y^*}$, this will give that $W'_{\pi'(z_0,z_1)}=W'_{\pi(z_0,z_1)}$. 

If $y_0$ is the first site $y\in S$ such that $y^*\in\pi'(z_0,z_1)$, we claim that $z_0$ must be the site preceding it in $\pi$ since otherwise $\pi'$ cannot be maximal in $\omega'$. Indeed, in such case, replacing $\pi'(z_0,y_0^*)$ by $\pi(z_0,y_0)$ in $\pi'$ would strictly increase the weight of the path in $\omega'$, as we now justify by dividing into two cases:\begin{itemize}
\item If $\|z_0-y_0\|\le R$ and $z_0$ is not preceding $y_0$ in $\pi$, then $\pi'(z_0,y_0^*)$ is non-empty and is composed of sites of weight $0$ in $\omega'$, while the path $\pi(z_0,y_0)$ can be substituted for $\pi'(z_0,y_0^*)$ in $\pi'$ and is composed only of sites of weight $1$ in $\omega'$, which contradicts the maximality of $\pi'$ in $\omega'$.
\item If $\|z_0-y_0\|>R$, set $t_0$ for the site of $\pi$ after $y_0$.  Then,
\begin{align*}W'_{\pi'(z_0,y_0^*)}&\le W_{\pi'(z_0,y_0^*)}\\
&\le W_{\pi(z_0,t_0)}-\omega_{y_0^*}\\
&=W_{\pi(z_0,y_0)}+\omega_{y_0}-\omega_{y_0^*}\\
&< W'_{\pi(z_0,y_0)},\end{align*}
where the first inequality is due to the fact that $\pi'(z_0,y_0^*)$ does not intersect $\{y^*:y\in S\}$, the second to the fact that $\pi$ is maximal in $\omega$ between $z_0$ and $t_0$, and the third to the fact that $y_0$ is a $R$-good turn so that $W_{\pi(z_0,y_0)}+1<W'_{\pi(z_0,y_0)}$. The last claim can be obtained from the following sequence of inequalities (the first one is due to the fact that $y_0$ is $R$-good)
\begin{equation}\label{eq:aaga}\sum_{\substack{z\in \pi(z_0,y_0)\\ \|z-y_0\|\le R}}\omega_z<R-1=\sum_{\substack{z\in \pi(z_0,y_0)\\ \|z-y_0\|\le R}}\omega'_z-1.\end{equation}
\end{itemize} 

The same reasoning implies that $z_1$ must be the site of $\pi$ following the last site $y_1\in S$ such that $y^*\in\pi(z_0,z_1)$. It only remains to exclude the case where $\pi'(z_0,z_1)$ contains more than one site of $\{y^*:y\in S\}$. In such case, pick two consecutive such sites $y_2$ and $y_3$.  Then, basically the same reasoning applies. Indeed, in this case as well $\pi'$ cannot be maximal since
\begin{align*}W'_{\pi'(y_2^*,y_3^*)}&\le W_{\pi'(y_2^*,y_3^*)}=W_{\pi'[y_2^*,y_3^*]}-\omega_{y_2^*}-\omega_{y_3^*}\\
&\le W_{\pi[y_2,y_3]}-\omega_{y_2^*}-\omega_{y_3^*}=W_{\pi(y_2,y_3)}+\omega_{y_2}+\omega_{y_3}-\omega_{y_2^*}-\omega_{y_3^*}\\
& < W'_{\pi(y_2,y_3)}.\end{align*}
The second line uses that $\pi$ is maximal in $\omega$ between the vertex before $y_2$ and the one after $y_3$. The last line follows from $W_{\pi(y_2,y_3)}+2<W'_{\pi(y_2,y_3)}$, an inequality which is obtained in the same way as \eqref{eq:aaga} using the fact that $y_2$ and $y_3$ are $R$-good turns.
\end{proof}
%\begin{figure}
%\centerline{\includegraphics[width=0.15\textwidth]{transformations5.pdf}}
%\caption{\label{fig:5}
%The transformation used in the proof of Theorem~\ref{thm:main} when $\omega_x$ is bounded from above but not from below: weights of ${\bf shield}_\pi(y)$ are drastically decreased and equalized while weights of $y$ and $y^*$ are increased and equalized.}
%\end{figure}
%
%\begin{figure}
%\centerline{\includegraphics[width=0.15\textwidth]{transformations6.pdf}}
%\caption{\label{fig:6}
%The transformation used in the proof of Theorem~\ref{thm:main} when $\omega_x$ is not bounded from above: weights of $y$ and $y^*$ are decreased and equalized, while weights of the sites preceding and following $y$ in $\pi$ are drastically increased and equalized.}
%\end{figure}

\paragraph{Acknowledgements} The first author is partly supported by the IDEX chair of Paris-Saclay, as well as the NCCR Swissmap. The third author was supported by the NSF grant DMS-1600239. The last author was supported by 
CNPq Bolsa de Produtividade grant.
 We thank Ryoki Fukushima and Shuta Nakajima for useful comments. \providecommand{\bysame}{\leavevmode\hbox to3em{\hrulefill}\thinspace}
\providecommand{\MR}{\relax\ifhmode\unskip\space\fi MR }
% \MRhref is called by the amsart/book/proc definition of \MR.
\providecommand{\MRhref}[2]{%
  \href{http://www.ams.org/mathscinet-getitem?mr=#1}{#2}
}
\providecommand{\href}[2]{#2}

%\begin{flushright}
%\footnotesize\obeylines
%  \textsc{Universit\'e de Gen\`eve}
%  \textsc{Gen\`eve, Switzerland}
%  \textsc{E-mail:} \texttt{hugo.duminil@unige.ch}
%  \end{flushright}
% 
\end{document}